\definecolor{orcidlogocol}{HTML}{A6CE39}
\tikzset{
  orcidlogo/.pic={
    \fill[orcidlogocol] svg{M256,128c0,70.7-57.3,128-128,128C57.3,256,0,198.7,0,128C0,57.3,57.3,0,128,0C198.7,0,256,57.3,256,128z};
    \fill[white] svg{M86.3,186.2H70.9V79.1h15.4v48.4V186.2z}
                 svg{M108.9,79.1h41.6c39.6,0,57,28.3,57,53.6c0,27.5-21.5,53.6-56.8,53.6h-41.8V79.1z M124.3,172.4h24.5c34.9,0,42.9-26.5,42.9-39.7c0-21.5-13.7-39.7-43.7-39.7h-23.7V172.4z}
                 svg{M88.7,56.8c0,5.5-4.5,10.1-10.1,10.1c-5.6,0-10.1-4.6-10.1-10.1c0-5.6,4.5-10.1,10.1-10.1C84.2,46.7,88.7,51.3,88.7,56.8z};
  }
}
\newcommand\orcidicon[1]{\href{https://orcid.org/#1}{\mbox{\scalerel*{
\begin{tikzpicture}[yscale=-1,transform shape]
\pic{orcidlogo};
\end{tikzpicture}
}{|}}}}
\journal{}
\begin{document}

\def\im{\mathrm{Im}\, }
\def\re{\mathrm{Re}\, }
\def\diam{\mathrm{diam}\, }
\def\mod{\mathrm{mod}\, }
\def\imod{\mathrm{invmod}\, }
\def\Hess{\mathrm{Hess}\, }
\def\rank{\mathrm{rank }\,  }
\def\rg{\mathrm{rg }\,  }
\def\d{\mathrm{d}}
\def\p{\partial }
\def\pr{\mathrm{pr}}
\def\reff{{\mathrm{ref}}}
\def\D{\mathrm{D}}
\def\id{\mathrm{id}}
\def\Id{\mathrm{Id}}
\def\Jet{\mathrm{Jet}}
\def\e{\epsilon}
\def\C{\mathbb{C}}
\def\Z{\mathbb{Z}}
\def\Q{\mathbb{Q}}
\def\N{\mathbb{N}}
\def\R{\mathbb{R}}
\def\K{\mathbb{K}}
\def\T{\mathbb{T}}
\def\diag{\mathrm{diag}\, }

	\newtheorem{theorem}{Theorem}[section]
	\newtheorem{example}{Example}[section]
	\newtheorem{definition}{Definition}[section]
	\newtheorem{remark}{Remark}[section]
	\newtheorem{corollary}[theorem]{Corollary}
	\newtheorem{lemma}[theorem]{Lemma}
	\newtheorem{proposition}[theorem]{Proposition}
	\newtheorem{prop}[theorem]{Proposition}
	\newtheorem{conjecture}[theorem]{Conjecture}
	\newtheorem{assumption}{Assumption}[section]

\begin{frontmatter}



\title{Variational Learning of Euler--Lagrange Dynamics from Data}

\author{Sina Ober-Blöbaum}
\author{Christian Offen\corref{cor1}}
\ead{christian.offen@uni-paderborn.de}
\ead[url]{https://www.uni-paderborn.de/en/person/85279}
\ead[orcid]{ORCiD: 0000-0002-5940-8057}
\cortext[cor1]{corresponding author ${\protect \orcidicon{0000-0002-5940-8057}}$}

\affiliation{organization={Paderborn University},
             addressline={Warburger Str.100},
             city={Paderborn},
             postcode={33098},
             country={Germany}}

\begin{abstract}
The principle of least action is one of the most fundamental physical principle.
It says that among all possible motions connecting two points in a phase space, the system will exhibit those motions which extremise an action functional.
Many qualitative features of dynamical systems, such as the presence of conservation laws and energy balance equations, are related to the existence of an action functional. Incorporating variational structure into learning algorithms for dynamical systems is, therefore, crucial in order to make sure that the learned model shares important features with the exact physical system.
In this paper we show how to incorporate variational principles into trajectory predictions of learned dynamical systems.
The novelty of this work is that (1) our technique relies only on discrete position data of observed trajectories.
Velocities or conjugate momenta do {\em not} need to be observed or approximated and {\em no} prior knowledge about the form of the variational principle is assumed. Instead, they are recovered using backward error analysis.
(2) Moreover, our technique compensates discretisation errors when trajectories are computed from the learned system. This is important when moderate to large step-sizes are used and high accuracy is required. For this, we introduce and rigorously analyse the concept of inverse modified Lagrangians by developing an inverse version of variational backward error analysis. (3) Finally, we introduce a method to perform system identification from position observations only, based on variational backward error analysis.
\end{abstract}


%
%
%
%
%
%
%
%
%

\begin{keyword}
Lagrangian learning
\sep variational backward error analysis
\sep modified Lagrangian
\sep variational integrators
\sep physics informed learning


\MSC[2020]65P10
\MSC[2020]93B30
\MSC[2020]68T05

\end{keyword}

\end{frontmatter}



\section{Introduction}

Physics informed learning has sparked extensive interest since embedding prior knowledge about physical laws can greatly improve the quality of predictions of dynamics based on observation data.
One of the most fundamental principles is Hamilton's principle or the principle of least action. Among all curves connecting two points in a configuration space only those correspond to physical motions which extremise an action functional. The existence of an action functional induces some subtle structure on the differential equations governing the motions, which we will refer to as {\em variational structure}.
When trying to identify a system from data, it, therefore, appears to be advantageous to learn the action rather than the differential equations which govern the motions. This does not only reduce data requirements since only a scalar valued map rather than the right hand side of a differential equation needs to be learned, but, more importantly, it guarantees that the learned system shares qualitative aspects with physical systems. For instance, autonomous variational systems conserve energy. When an autonomous physical system is learned and the learned system is variational, then the learned system is conservative as well. 


Lagrangian Shadow Integration (LSI), introduced in this paper, provides a novel framework to predict motions of dynamical systems from data while incorporating variational structure. The data consists of discrete observations of trajectories (snapshot data). We assume that only position data is available, in contrast to other approaches such as Lagrangian Neural Networks (LNN) \cite{LNN}, which assume the availability of velocity and acceleration information. 
We incorporate variational structure as prior knowledge without making any assumptions on the specific form of the variational principle, in contrast to \cite{Aoshima2021}, for instance, which assumes a mechanical form.
The novelty of our approach lies in the fact that we do not learn the exact variational structure but an inverse modified structure. The inverse modified variational principle has the property that when integrated using a variational integrator {\em no} discretisation error occurs. We prove the existence of inverse modified variational structure (up to truncation error) and explain the compensation of discretisation errors. Our analysis heavily makes use of {\em variational backward error analysis}, which was developed in \cite{Vermeeren2017}.
Moreover, we show how to use Lagrangian Shadow Integration to identify structure and conserved quantities of dynamical systems based on observations of position data only, without any requirements of further knowledge about the system's structure.

In the proceeding of this section we will briefly review classical theory on Lagrangian dynamics and variational integrators, introduce the idea behind Lagrangian Shadow Integration, and contrast the technique to other approaches. We will restrict ourselves to autonomous systems in this work.
\Cref{sec:LSI} contains a more detailed explanation of the Lagrangian Shadow Integration framework. While Lagrangian Shadow Integration can be combined with many machine learning techniques, we show how to realise the technique using kernel based methods. \Cref{sec:ex} contains numerical examples. Finally, \cref{sec:Existence} provides rigorous theoretical underpinning of our technique and connects the framework with highly developed analysis techniques from Geometric Integration.

\subsection{Lagrangian dynamics and variational integrators}

In many dynamical systems
motions $q \colon [t_0,t_N] \to Q$ connecting two points $q_0$, $q_N$ of a state space $Q=\R^n$ are necessarily stationary points of an action functional
\begin{equation}\label{eq:ActionIntro}
S(q) = \int_{t_0}^{t_N} L(q(t),\dot{q}(t)) \d t,
\end{equation}
i.e.\ for any $v \colon [t_0,t_N] \to \R^n$ with $v(t_0)=0=v(t_N)$ we have
\[
\lim_{\e \to 0} \frac{S(q+ \e v) - S(q)}{\e} =0.
\]
Equivalently, motions $q$ fulfil the Euler--Lagrange equations
\begin{equation}\label{eq:ELIntro}
0 = \mathcal{E}(L) = \frac{\p L}{\p q}(q,\dot{q}) - \frac{\d }{\d t} \frac{\p L}{\p \dot{q}}(q,\dot{q})
\end{equation}
subject to the boundary conditions $q(t_0)=q_0$, $q(t_N)=q_N$.
$L$ is called the Lagrangian of the dynamical system.
In line with the examples presented in this article, we have restricted ourselves to the autonomous case.
The variational structure governing the motions is related to structural properties of the dynamical system. For instance, motions preserve the Hamiltonian
\begin{equation}
	\label{eq:HamIntro}
H(q, \dot{q}) = \sum_{j=0}^n \dot{q}^j \frac{\p L}{\p \dot{q}^j} - L(q,\dot{q}),
\end{equation}
which in classical mechanics corresponds to energy preservation in many cases.
Moreover, Noether's theorem relates variational symmetries of $L$ to further conserved quantities: for instance, rotational invariance of $L$ yields conservation of angular momenta. 

To guarantee that numerical solutions of \eqref{eq:ELIntro} inherit physical properties of the exact system related to variational structure, rather than applying a numerical integrator to \eqref{eq:ELIntro} directly, one discretises the action functional and looks for its stationary points. The discrete dynamics are governed by a discrete variational principle and a discrete version of Noether's theorem exists to derive discrete conservation laws \cite{GeomIntegration}. Integrators derived in such a way are called {\em variational integrators}. More precisely, if $t_0,t_1,\ldots,t_N$ is a uniform grid with step-size $h$ of the interval $[t_0,t_N]$ then $S$ in \eqref{eq:ActionIntro} is replaced by a discrete action of the form
\begin{equation}\label{eq:SDiscreteIntro}
S_\Delta ((q_j)_{j=0}^N) = \sum_{j=0}^{N-1}  L_\Delta (q_j,q_{j+1}),
\end{equation}
where the discrete Lagrangian $L_\Delta (a,b)$ is an approximation of the {\em exact discrete Lagrangian} $\int_{0}^{h} L(q(t),\dot{q}(t))$, where $q$ solves\footnote{A unique solution exists if $h>0$ is sufficiently small and $a$ and $b$ sufficiently close to one another.} \eqref{eq:ELIntro} subject to the boundary conditions $q(0)=a$, $q(h)=b$. Critical points $\nabla S_\Delta ((q_j)_{j=0}^N) =0$ of the discrete action \eqref{eq:SDiscreteIntro} fulfil the discrete Euler--Lagrange equations
\begin{equation}\label{eq:DELIntro}
	\nabla_2 L_\Delta (q_{j-1},q_j) + \nabla_1 L_\Delta (q_j,q_{j+1}) = 0, \qquad j=1,\ldots,N-1
\end{equation}
on inner grid points. Here $\nabla_1 L_\Delta$ and $\nabla_2 L_\Delta$ denote the derivative with respect to the first or second input argument of $L_\Delta$, respectively.

\begin{example}\label{ex:MPandTZ}
Popular discretisation methods are the midpoint rule
\[
L_\Delta(q_0,q_1) = h L\left(\frac{q_0+q_1}{2},\frac{q_1-q_0}{h}\right)
\]
or the trapezoidal rule
\[
L_\Delta(q_0,q_1) =  \frac{h}{2} L\left(q_0,\frac{q_1-q_0}{h} \right) + \frac{h}{2} L\left(q_1,\frac{q_1-q_0}{h} \right).
\]
Both yield second order schemes.
\end{example}

To compute an approximation to a solution to an initial value problem $q(t_0)=q_0$, $\dot{q}(t_0)=\dot{q}_0$ for \eqref{eq:ELIntro}, variational integration proceeds as follows.

\begin{itemize}
\item
A conjugate momentum $p_0 = \frac{\p L}{\p \dot{q}}(q_0,\dot{q}_0)$ is computed (Legendre transform).

\item
$q_1$ is approximated by solving the discrete conjugate momenta equation $-p_0 = \nabla_1 L_\Delta (q_0,q_1)$ for $q_1$.

\item
The discrete Euler--Lagrange equations \eqref{eq:DELIntro} is used to compute $q_2$, $q_3$, $\ldots$ successively.

\item
If of interest, discrete conjugate momenta can be computed using
\[
p_j = -\nabla_1 L_\Delta (q_j,q_{j+1}) 
\quad \text{ or } \quad p_{j+1} = \nabla_2 L_\Delta (q_j,q_{j+1})
\]
and velocities can be computed by solving $p_j = \frac{\p L}{\p \dot{q}}(q_j,\dot{q}_j)$ for $\dot{q}_j$ in a post-processing step.

\end{itemize}

\subsection{Lagrangian Shadow Integration}

Lagrangian Shadow Integration provides a framework to predict motions of unknown dynamical systems with variational structure form trajectory observations. It consists of a step in which the underlying continuous dynamical system is identified using machine learning techniques and another step in which the system is integrated numerically. Both steps make use of the system's variational structure.
We assume that discrete snapshot data of trajectories to a time step $h$ are available. Lagrangian Shadow Integration proceeds as follows.

\begin{enumerate}

\item
Choose a variational integrator.

\item
Learn an inverse modified Lagrangian $L_\imod$ of the system. This is not the exact Lagrangian $L$ of the dynamical system but an adapted (shadow) version of $L$ which is tailored to the variational integration scheme.

\item
Apply the variational integration technique to $L_\imod$ to compute trajectories and, if needed, conjugate momenta.

\item
Bonus. Identify the exact Lagrangian $L$ and energy $H$ using backward error analysis techniques from the learned $L_\imod$. Compute velocities to the trajectory data of step (3).

\end{enumerate}

The inverse modified Lagrangian $L_\imod$ can be learned directly from the observed snapshot data. Moreover, it has the property that in step (3) the discretisation error of the variational integrator is compensated, which yields excellent conservation properties.
The concept of inverse modified Lagrangians $L_\imod$, introduced in this article, can be thought of as an inverse version of the concept of modified Lagrangians $L_\mod$ \cite{Vermeeren2017}, which will be reviewed in the next section. In short, a {\em modified Lagrangian} governs the numerical solutions obtained by a variational integration scheme applied to a system defined by a Lagrangian $L$ (the exact system). An {\em inverse modified Lagrangian} is a continuous Lagrangian $L_\imod$ such that a variational integration scheme applied to $L_\imod$ will produce motions of the system defined by $L$ (the exact system).

Lagrangian Shadow Integration can be contrasted to other approaches of predicting the motions of dynamical systems from data.


\begin{itemize}

\item
An incorporation of variational structure in prediction model guarantees that structural properties of the dynamics are preserved such that predicted motions show an excellent energy behaviour, for instance. Moreover, Lagrangian Shadow Integration could be combined with learning techniques which guarantee known symmetry properties of $L_\imod$ such as GIM kernels \cite{ridderbusch2021learning} or group invariant neural networks \cite{Cohen2019,Cohen2016,Kondor2018a,Dehmamy2021}.
In this way, if symmetries of the system are known, the discrete Noether theorem guarantees the existence of corresponding conservation laws.
Moreover, we only learn the scalar valued map $L_\imod$ rather than the flow map $(x,v) \mapsto \Phi(x,v)$ of the underlying second order differential equation. This reduces the dimensionality of the problem.

\item
The observed data only contains position data without velocity information. Any technique that aims to learn the flow map $(x,v) \mapsto \Phi(x,v)$ of the underlying second order differential equation but does not make use of prior knowledge about variational structure would have to approximate velocities or conjugate momenta before teaching a model for $\Phi$.
The approximation of unknown initial velocities of the observed trajectories introduces an additional approximation error in the data. In contrast, Lagrangian Shadow Integration can directly use the observed data without additional approximations or without requiring further prior knowledge. This contrasts Lagrangian Shadow Integration to other techniques which
\begin{itemize}
	\item
assume that velocity data or even higher derivatives of trajectories is available \cite{LNN}

\item
or assume more prior knowledge about the system such as \cite{Aoshima2021,Evangelisti2022}. Prior knowledge about the form of the Lagrangian or kinetic term fix much of the phase space structure. Lagrangian Shadow Integration can discover the full Lagrangian from data without restrictions on its form.

\end{itemize}

\item
Moreover, approaches such as Lagrangian Neural Networks \cite{LNN} and related approaches \cite{Cheng2016,Aoshima2021,Evangelisti2022} aim at learning the exact Lagrangian. When such a system is integrated, an additional approximation error occurs which Shadow Lagrangian Integration compensates by construction.
Moreover, since a Lagrangian is not uniquely determined by the motions of a dynamical system, techniques which aim to identify the exact Lagrangian might learn a Lagrangian which is not well suited for numerical integration, as we will demonstrate in \cref{sec:ex}.

\item
Shadow Lagrangian Integration allows the employment of established analysis tools from geometric integration technique. These can be actively employed for verification and system identification.

\item


Another approach is to learn the Hamiltonian of a dynamical system instead of its Lagrangian. In contrast to Lagrangian Shadow Integration, these approaches typically assume that the symplectic structure of the system is known and assume that observations of momentum data is available in addition to position data: Hamiltonian Neural Networks \cite{HNN} learn the Hamiltonian of a system from position and momentum data of trajectories. Bertalan et al.\ \cite{Bertalan2020} learn Hamiltonians using Gaussian Processes and introduce a technique to learn the symplectic structure of a system from position and momentum observations of trajectories. SympNets \cite{SympNets} learn the symplectic flow map of a Hamiltonian systems using a neural network architecture which is symplectic by construction. Other approaches learn the generating function of the symplectic flow map \cite{Rath2021}. In a previous article \cite{symplecticShadowIntegrators} the authors introduced {\em Symplectic Shadow Integration} which learns an inverse modified Hamiltonian using kernel based methods. The present article can be seen as a Lagrangian version of \cite{symplecticShadowIntegrators}.

\item
Backward error analysis has been used to improve the accuracy of classical numerical methods for solving differential equations in \cite{Chartier2007}, for instance.
For Hamiltonian systems it has been used in a data-driven context in \cite{symplecticShadowIntegrators,David2021}.
While traditional backward error analysis of structure preserving methods \cite{GeomIntegration} applies to the Hamiltonian description of dynamical systems, here we apply {\em variational backward error analysis}, which stays fully on the variational side and was developed in \cite{Vermeeren2017}. In this work, we develop an inverse version of variational backward error analysis and apply it in a data-driven context. (Inverse) variational backward error analysis is used as a tool to analyse and verify the learning methods, for system identification, and for the computation of velocities to trajectories for which only position data has been observed.

\item 
Up to a coordinate transform, learning an inverse modified Lagrangian $L_\imod$ from data, as proposed in Lagrangian Shadow Integration, corresponds to learning a discrete Lagrangian $L_\Delta$, as done in \cite{Qin2020}.
This is sufficient for solving boundary value problems or to extend discrete trajectories. However, to solve initial value problems with initial data of the form $(q_0,\dot{q}_0)$ or to perform system identification, variational backward error analysis as performed in Lagrangian Shadow Integration, is required.

\end{itemize}


\section{The Lagrangian Shadow Integration Framework}\label{sec:LSI}

After reviewing the concept of modified Lagrangians, we introduce inverse modified Lagrangians and show how to predict motions once an inverse modified Lagrangian is learned. We then discuss the non-uniqueness of Lagrangians and develop regularisation terms that are needed in the learning process. Finally, we show how to learn inverse modified Lagrangians using neural networks as well as Gaussian processes. 

\subsection{Modified Lagrangians}

Let us review the concept of modified Lagrangians that were introduced in \cite{Vermeeren2017}. Related computational tools are reviewed in more detail in \cref{sec:Existence}.

When a dynamical system defined by a Lagrangian $L(q,\dot{q})$ is integrated using a variational integrator with step-size $h$, then numerical solutions fulfil discrete Euler--Lagrange equations
\[
	\nabla_2 L_\Delta (q_{j-1},q_j) + \nabla_1 L_\Delta (q_j,q_{j+1}) = 0, \qquad j=1,\ldots,N-1,
\]
where $L_\Delta$ is the discrete Lagrangian corresponding to the integrator.
Variational backward error analysis seeks a continuous Lagrangian $L_\mod(q,\dot{q})$ such that any solution $q$ to the modified Euler--Lagrange equations $\mathcal{E}(L_\mod)=0$ fulfils the discrete Euler--Lagrange equations, i.e.\
\begin{equation}\label{eq:DELContinuousq}
\nabla_2 L_\Delta (q(t-h),q(t)) + \nabla_1 L_\Delta (q(t),q(t+h)) = 0.
\end{equation}
Modified Lagrangians $L_\mod$ are sought as formal power series in the step size $h$ such that when truncated to any order in $h$ then \eqref{eq:DELContinuousq} is fulfilled up to higher order terms. Except in special cases, the formal power series $L_\mod$ will not converge. However, optimal truncation results have been proved for an analogous theory on the Hamiltonian side \cite{GeomIntegration} and a dynamical meaning of truncations of $L_\mod$ can be observed in numerical experiments \cite{BEAMulti,BEASymPDE}.
The relation between $L$, $L_\Delta$, and $L_\mod$ is illustrated in \cref{fig:LmodIllustration}.
\begin{figure}
	\hfill
	\includegraphics[width=0.4\paperwidth]{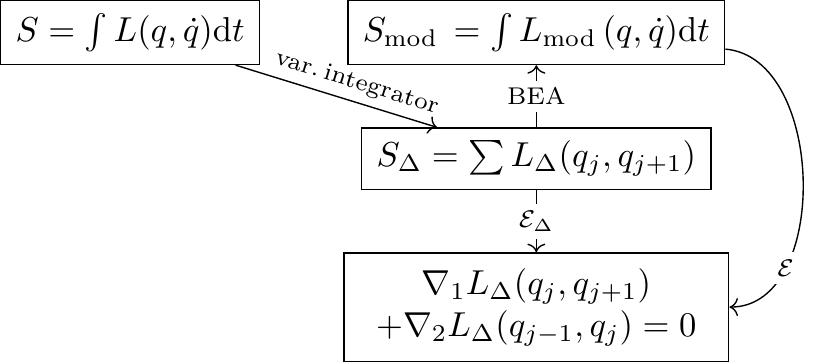}
	\caption{
		Variational backward error analysis (BEA) seeks a modified Lagrangian $L_\mod$ such that solutions to the modified Euler--Lagrange equations $\mathcal{E}(L_\mod)=0$ fulfil the discrete Euler--Lagrange equations $\mathcal{E}_\Delta(L_\Delta)=0$ with $q_j = q(t+jh)$.
	}\label{fig:LmodIllustration}
\end{figure}

\subsection{The concept of inverse modified Lagrangians}

In the following, we consider (consistent) variational discretisation schemes $L \mapsto L_\Delta$, which map a Lagrangian $L$ to a discrete Lagrangian $L_\Delta$.
We further require that the discretisation scheme is linear in the Lagrangian, i.e.\ $sL^1 + L^2 \mapsto s L^1_\Delta+L^2_\Delta$.

The {\em exact discrete Lagrangian} $L_\Delta^{\mathrm{ex}}$ to a step size $h$ and a system defined by a Lagrangian $L$ is defined as
\begin{equation}\label{eq:ExactDiscreteL}
L_\Delta^{\mathrm{ex}}(q_0,q_1)= \int_{t_0}^{t_0+h} L(q(t),\dot{q}(t)) \d t,
\end{equation}
where $q$ solves the Euler--Lagrange equations \eqref{eq:ELIntro} subject to $q(t_0)=q_0$, $q(t_0+h)=q_1$.
The inverse modified Lagrangian to a Lagrangian $L$ and an integration scheme is a continuous Lagrangian $L_\imod$ such that $L_{\imod \Delta}$ coincides with the exact discrete Lagrangian $L_\Delta^{\mathrm{ex}}$ \eqref{eq:ExactDiscreteL}, i.e.\
\begin{equation}\label{eq:LinvmodLexact}
	L_{\imod\Delta}(q_0,q_1) = L_\Delta^{\mathrm{ex}}(q_0,q_1).
\end{equation}
As the discussion in \cref{sec:Existence} will show, $L_\imod$ exists only as a formal power series in the step size $h$. Equality in \eqref{eq:LinvmodLexact} means that the left and right hand side coincide up to any order in $h$.
Again, in the sense of formal power series, the discrete conjugate momenta of $L_{\imod\Delta}$ coincide with the exact conjugate momenta of $L$, i.e.\
\begin{align}
	\nabla_1 L_{\imod\Delta} (q_0,q_1)
	&= \frac{\p L_{\imod\Delta} }{\p q_0}(q_0,q_1)
	= -\frac{\p L}{\p \dot{q}}(q_0,\dot{q}_0)
	= -p(t_0)\\
	\nabla_2 L_{\imod\Delta} (q_0,q_1)
	&= \frac{\p L_{\imod\Delta} }{\p q_1}(q_0,q_1)
	= \phantom{-}\frac{\p L}{\p \dot{q}}(q_1,\dot{q}_1) 
	= \phantom{-}p(t_1).
\end{align}
The corresponding calculation for exact discrete Lagrangian can be found in \cite{GeomIntegration,MarsdenWestVariationalIntegrators}, for instance.
The exact Lagrangian $L$ can be recovered from $L_\imod$ to any order in $h$ using variational backward error analysis \cite{Vermeeren2017}. The necessary computational tools will be reviewed in \cref{thm:VarBEA}. For instance, for the midpoint rule in the one-dimensional case we have
\begin{equation}\label{eq:MPBEA}
	L = L_{\imod}+\frac{1}{24} h^2 \left(\frac{\left(L_{\imod}^{(1,0)}-L_\imod^{(1,1)} \dot{q}\right)^2}{L_{\imod}^{(0,2)}}-L_{\imod}^{(2,0)} \dot{q}^2\right)
	+O\left(h^4\right).
\end{equation}
For the trapezoidal rule we obtain
\[
L = L_{\imod} + \frac{1}{24} h^2 \left(2 L_{\imod}^{(2,0)} \left(\dot{q}\right)^2+\frac{\left(L_{\imod}^{(1,0)}-L_{\imod}^{(1,1)} \dot{q}\right)^2}{L_{\imod}^{(0,2)}}\right)
+O\left(h^4\right).
\]
Here $L^{(i,j)} = \frac{\p^{i+j}}{ (\p q)^i  (\p \dot{q})^j } (q,\dot{q})$ denote derivatives which can be computed using automatic differentiation.
Higher order terms and higher dimensional examples can be found or generated using the accompanying Mathematica scripts \cite{LagrangianShadowIntegratorSoftware}.

The relation between a Lagrangian, a modified Lagrangian, and an inverse modified Lagrangian is illustrated in \cref{fig:LimodDiagram}.
\begin{figure}
\centering
\includegraphics[width=0.8\paperwidth]{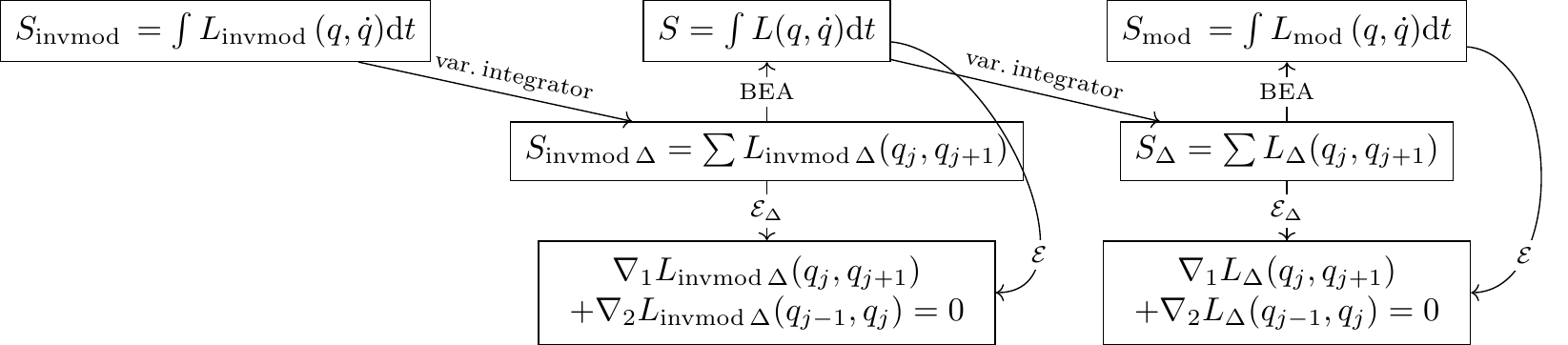}
\caption{
This figure extends \cref{fig:LmodIllustration} by a column to the left and two boxes in the centre column.
To a given variational integration scheme, the inverse modified Lagrangian $L_\imod$ is a continuous Lagrangian such that the discrete Lagrangian $L_{\imod \Delta}$ to $L_\imod$ coincides with the exact Lagrangian $L$ up to any order in the step size $h$. The exact Lagrangian $L$ can be recovered from $L_\imod$ using variational backward error analysis (BEA). Up to any order in the step size $h$, solutions to the exact Euler--Lagrange equations $\mathcal{E}(L)=0$ fulfil the discrete Euler--Lagrange equations $\mathcal{E}_\Delta(L_\imod)=0$ to the Lagrangian $L_\imod$ with $q_j = q(t+jh)$.
}\label{fig:LimodDiagram}
\end{figure}



\subsection{Computation of motions and idea of Lagrangian Shadow Integration}

The idea of Lagrangian Shadow Integration is to learn $L_\imod$ such that the Euler--Lagrange equations to $L_{\imod \Delta}$ are consistent with the observed trajectory data. For this, we will ignore the fact that $L_\imod$ only exists in the sense of formal power series and simply model $L_\imod$ as a neural network or Gaussian Process which maps $(q,\dot{q})$ to a scalar output.
Once $L_\imod$ is learned, an approximation $L$ to the (unknown) exact Lagrangian is computed using a variational backward error analysis formula (such as \eqref{eq:MPBEA} in case of the midpoint rule in one dimension).
A trajectory to initial position and velocity $(q_0,\dot{q}_0)$ can then be computed as follows.

\begin{itemize}

\item
Set $p_0 = \frac{\p L}{\p \dot{q}}(q_0,\dot{q}_0)$.

\item
Solve $\nabla_1 L_{\imod\Delta} (q_0, q_1) + p_0 = 0$ for $q_1$. 

\item 
For $j=1,\ldots,N-1$ solve the discrete Euler--Lagrange equations for $L_{\imod \Delta}$
\begin{equation}\label{eq:DELDelta}
	\nabla_2 L_{\imod\Delta} (q_{j-1},q_j) + \nabla_1 L_{\imod\Delta} (q_j,q_{j+1}) = 0, \qquad j=1,\ldots
\end{equation}
for $q_{j+1}$.

\item
If a velocity value $\dot{q}_j$ is needed, it can be computed by solving
\[\nabla_2 L_{\imod\Delta} (q_{j-1}, q_j)  = \frac{\p L}{\p \dot{q}}(q_j,\dot{q}_j)\]
for $q_j$, where a formula for $L$ has been calculated from $L_\imod$ to the desired order in $h$ using variational backward error analysis.

\end{itemize}

\begin{remark}
Notice that values for the conjugate momenta
\[
\frac{\p L}{\p \dot{q}}(q_j,\dot{q}_j) = p_j 
= - \nabla_1 L_\imod (q_j,q_{j+1}) 
= \nabla_2 L_\imod (q_{j-1},q_{j})
\]
do not necessarily carry a physical meaning since they depend on the choice of a Lagrangian, which is not uniquely determined by a system's dynamics. 
\end{remark}

\subsection{Non-uniqueness of Lagrangians}\label{sec:ambiguity}
Lagrangians are not uniquely determined by the system's dynamics. In particular, constant Lagrangians have trivial Euler--Lagrange equations $\mathcal{E}(L)\equiv 0$ and are, therefore, consistent with any dynamical data. To avoid learning singular Lagrangians, we develop a normalisation condition for $L$ and $L_\imod$. For this, let us review the non-uniqueness of the Lagrangian $L$ with its conjugate momenta $p$ and also discuss the ambiguity of symplectic structures and Hamiltonians.

Lagrangian dynamics can be translated to Hamiltonian dynamics. For this, conjugate momenta are introduced as $p = \frac{\p L}{\p \dot{q}}(q,\dot{q})$. Provided that $L$ is regular, i.e.\ $\left(\frac{\p^2 L}{\p \dot{q}^j \p \dot{q}^i}\right)_{i,j = 1}^n$, $(q,\dot{q})$ and $(q,p)$ provide local coordinate systems on the tangent bundle $TQ$ and the cotangent bundle $T^\ast Q$, respectively, which are both locally identified with $\R^{2n}$ in the following.
The Hamiltonian is given as
\[H(q,p) = \sum_{j=1}^n \dot{q}^j((q,p)) p^j - L(q,\dot{q}(q,p)),\]
where $q^j$ is expressed in terms of $q,p$. The Hamiltonian $H$ is conserved along motions.
The corresponding symplectic structure $\omega$ and Liouville volume $\mu$ are given as
\[
\omega = \sum_{j=1}^n \d q^j \wedge \d p^j, \quad \mu = \frac{1}{n!} \underbrace{\omega \wedge \ldots \wedge \omega}_{n\text{ times}}.
\]
In coordinates $(q,\dot{q})$ the Liouville volume form can be expressed as
\[
\mu = \frac{\p L}{\p \dot{q}^1} \cdot \ldots \cdot  \frac{\p L}{\p \dot{q}^n} \d q^1 \wedge \d \dot{q}^1 \wedge \ldots \wedge \d q^n \wedge \d \dot{q}^n.
\]
The Lagrangian $L$ is defined by the dynamics up to scalar multiples and up to a total derivative of a scalar valued map $\xi$ in $q$, i.e.\ the Lagrangian $\tilde{L} = s L + \frac{\d }{\d t} \xi(q)$ defines the same dynamics as $L$ but its conjugate momenta is $\tilde{p} = s p + \nabla \xi$. The symplectic structures $\omega = \d q \wedge \d p$ and $\tilde{\omega} =  \d q \wedge \d \tilde{p}$ relate as $\tilde{\omega} = s \omega$. Once the symplectic structure is fixed, the Hamiltonian $H$ is defined up to a constant by the dynamics.

In the following, we will consider the following non-triviality condition for $L_\imod$, where $E^{2n}=([0,1] \times [0,1])^n$ is the unit hypercube in $\R^{2n}$ with coordinates $q_1,\dot{q}_1,\ldots,q_n,\dot{q}_n$:

\begin{equation}\label{eq:normaliseL}
c=\int_{E^{2n}} \mu = \int_{E^{2n}} \frac{\p L}{\p \dot{q}^1} \cdot \ldots \cdot  \frac{\p L}{\p \dot{q}^n} \d q^1  \d \dot{q}^1 \ldots  \d q^n  \d \dot{q}^n,
\end{equation}
where $c \not =0$ is a constant controlling the scaling of $L$. Notice that while \eqref{eq:normaliseL} removes the scaling ambiguity, it does not remove the ambiguity of summands which are total derivatives, even if a further normalisation such as $L(0,0)=0$ is added. However, it forces $L$ to be non-trivial and regular.
In the following, we use the approximation
\begin{equation}\label{eq:normaliseLimod}
	c=\int_{E^{2n}} \mu = \int_{E^{2n}} \frac{\p L_\imod}{\p \dot{q}^1} \cdot \ldots \cdot  \frac{\p L_\imod}{\p \dot{q}^n} \d q^1  \d \dot{q}^1 \ldots  \d q^n  \d \dot{q}^n
\end{equation}
of \eqref{eq:normaliseL}, which coincides with \eqref{eq:normaliseL} up to $\mathcal{O}(h)$-terms (in the infinite data limit).

\begin{remark}
An alternative (more ad hoc) non-triviality condition is
\[  \frac{\p L_\imod}{\p \dot{q}}(q_{\mathrm{base}},\dot{q}_{\mathrm{base}})  - p_{\mathrm{base}} =0, \qquad p_{\mathrm{base}} \not = 0,\]
which imposes that the value of the conjugate momentum of $L_\imod$ at an arbitrary point $(q_{\mathrm{base}},\dot{q}_{\mathrm{base}})$ is $p_{\mathrm{base}}$. Again, this does not impose any restrictions on the dynamical system. 
However, in the following we will use \eqref{eq:normaliseLimod} as it balances several points of the phase space and is more geometric.
\end{remark}

\subsection{Learning the inverse modified Lagrangian}\label{sec:learn}
To teach a model for $L_\imod$, the idea is to use the discrete Euler--Lagrange equations \eqref{eq:DELDelta} for all available data triples $(q_{j-1},q_{j},q_{j+1})$ together with a discretisation of the non-triviality condition \eqref{eq:normaliseLimod}. In addition, we will use a simple quadrature on \eqref{eq:normaliseLimod} and obtain the condition $\ell_{\mathrm{nontrivial}} =0$ with
\begin{equation}\label{eq:normaliseLimodQuad}
\ell_{\mathrm{nontrivial}} = c - \frac{1}{2^{2n}}\sum_{\mathrm{corners\, of\,} E^{2n}} \frac{\p L_\imod}{\p \dot{q}^1} \cdot \ldots \cdot  \frac{\p L_\imod}{\p \dot{q}^n}.
\end{equation}
Without the condition $\ell_{\mathrm{nontrivial}}=0$ any constant inverse modified Lagrangian $L_\imod \equiv \mathrm{const}$ would be consistent with the data because the discrete Euler--Lagrange equations are fulfilled on the data for any $L_{\imod\Delta} \equiv \mathrm{const}$.

\subsubsection{ANN}
If $L_\imod$ is modelled as a differentiable artificial neural network, we can define \[
\ell_{\mathrm{data}} 
=  \sum_{q \in \mathrm{trj}} \sum_{j=1}^{ \mathrm{length}(q)-1} \| 
	\nabla_2 L_{\imod \Delta} (q_{j-1},q_j) + \nabla_1 L_{\imod \Delta} (q_j,q_{j+1}) \|^2.
\]
Here $\mathrm{trj}$ denotes the set of all trajectories of the training data. Minimisation of $\ell_{\mathrm{data}}$ enforces the discrete Euler--Lagrange equations to hold on the training data. More concretely, for the midpoint rule $\ell_{\mathrm{data}}$ reads
\begin{equation}
\label{eq:ElDataNN}
\begin{split}
\ell_{\mathrm{data}} =  \sum_{q \in \mathrm{trj}} \sum_{j=1}^{ \mathrm{length}(q)-1} &\Bigg\| 
\frac 12 \frac{\p L_{\imod}}{\p q} \left( \frac{q_{j-1}+q_j}{2}, \frac{q_{j}-q_{j-1}}{h} \right)
+\frac 1h \frac{\p L_{\imod}}{\p \dot q} \left( \frac{q_{j-1}+q_j}{2}, \frac{q_{j}-q_{j-1}}{h} \right)\\
&+ \frac 12 \frac{\p L_{\imod}}{\p q} \left( \frac{q_{j}+q_{j+1}}{2}, \frac{q_{j+1}-q_j}{h} \right) 
- \frac 1h \frac{\p L_{\imod}}{\p \dot q} \left( \frac{q_{j}+q_{j+1}}{2}, \frac{q_{j+1}-q_j}{h} \right) 
\Bigg\|^2.
\end{split}
\end{equation}

The network can then be trained using a weighted sum of the non-triviality term $\|\ell_{\mathrm{nontrivial}}\|^2$ and the term $\ell_{\mathrm{data}}$. The term $\|\ell_{\mathrm{nontrivial}}\|^2$ can be interpreted as a regularising term.

\subsubsection{Gaussian Process Regression}\label{subsec:GPLearnimodL}
In the numerical examples of this paper, we will use Gaussian Process Regression. For an introduction we refer to \cite{Rasmussen2005}. The method can be applied to all kernel based methods with sufficiently differentiable kernel.

In the following, denote the tangent bundle over the state space $Q=\R^n$ by $TQ$ which is identified with $\R^{2n}$.
%
%
If $\hat{L}_\imod$ is a Gaussian process with zero mean and kernel $k\colon TQ \times TQ \to \R$ and (a priori unknown) values $L_\imod(Z) = \begin{pmatrix}
L_\imod(z_1),& \ldots, &L_\imod(z_M) \end{pmatrix}^\top$ for $ Z = \begin{pmatrix}
z_1,& \ldots, & z_M \end{pmatrix} \subset TQ$, then the value $L_\imod(y)$ for some $(q,\dot{q}) \in TQ$ can be predicted as the expectation
\begin{equation}\label{eq:LimodExpect}
\mathbb{E}[\hat{L}_\imod((q,\dot{q}))|(Z,L_\imod(Z))] = k((q,\dot{q}),Z)^\top k(Z,Z)^{-1} L_\imod(Z)
\end{equation}
with $k((q,\dot{q}),Z)^\top = \begin{pmatrix}
k((q,\dot{q}),z_1), & \ldots, & k((q,\dot{q}),z_M)
\end{pmatrix}$ and $k(Z,Z) = (k(z_i,z_j))_{i,j=1}^M$.


Since the discretisation scheme $I$ is linear in the Lagrangian, we have
\begin{equation}\label{eq:DLimoddiscExpect}
\begin{split}
L_{\imod\Delta}((q_a,q_b)) = \mathbb{E}[\hat{L}_{\imod\Delta}&((q_a,q_b))|(Z,L_\imod(Z))] 
=I[k]((q_a,q_b),Z)^\top k(Z,Z)^{-1} L_\imod(Z)
\end{split}
\end{equation}
In case of the midpoint rule
\[I[k]((q_a,q_b),Z) = \begin{pmatrix}
k \left(\left( \frac{q_a+q_b}{2},\frac{q_b-q_a}{h} \right),z_1 \right),& \ldots,& k\left(\left( \frac{q_a+q_b}{2},\frac{q_b-q_a}{h} \right),z_M \right)
\end{pmatrix}.\]
In case of the trapezoidal rule
\[I[k]((q_a,q_b),Z) =  \left(
	\frac{1}{2}k \left(\left( q_a,\frac{q_b-q_a}{h} \right),z_j \right) + \frac{1}{2}
	k \left(\left( q_b,\frac{q_b-q_a}{h} \right),z_j \right)
	 \right)_{j=1}^M.
\]
In general, $I[k]((q_a,q_b),Z) = \left(
I(y \mapsto k(y,z_j))(q_a,q_b)\right)_{j=1}^M$.
The right hand side of \eqref{eq:DLimoddiscExpect} can now be differentiated with respect to $q_a$ or $q_b$ to obtain expressions for
$\nabla_1 L_{\imod\Delta}((q_a,q_b))$
and
$\nabla_2 L_{\imod\Delta}((q_a,q_b))$.
These can be substituted into the discrete Euler--Lagrange equations \eqref{eq:DELDelta} for each data triple $(q_{j-1},q_j,q_{j+1})$. 
For the midpoint rule this yields a linear system consisting of the following equation for each data triple $(q_{j-1},q_j,q_{j+1})$ in the training set.
\begin{align*}
\Bigg[
&\frac{1}{h} \left( \nabla_{\dot{q}} k \left(\left( \frac{q_{j-1}+q_j}{2},\frac{q_j-q_{j-1}}{h} \right),Z  \right)^\top
- \nabla_{\dot{q}} k \left(\left( \frac{q_{j+1}+q_{j}}{2},\frac{q_{j+1}-q_j}{h} \right),Z  \right)^\top
\right)\\
+&\frac{1}{2} \left( \nabla_{q} k \left(\left( \frac{q_{j-1}+q_j}{2},\frac{q_j-q_{j-1}}{h} \right),Z  \right)^\top
- \nabla_{q} k \left(\left( \frac{q_{j+1}+q_{j}}{2},\frac{q_{j+1}-q_j}{h} \right),Z  \right)^\top
\right)
\Bigg]
\cdot k(Z,Z)^{-1}L_\imod(Z) =0
\end{align*}

Together with the non-triviality condition $\ell_{\mathrm{nontrivial}} =0$
\[
\left(\frac{1}{2^{2n}} \sum_{v \in \mathrm{corners}(E^{2n})} \mathbf{1}_n^\top \nabla_{\dot{q}} k(v,Z)^\top \right) k(Z,Z)^{-1}L_\imod(Z) =0
\]
where $\mathbf{1}_n = \begin{pmatrix}	1,& \ldots, &1\end{pmatrix}$
and the normalising condition
\[
k((q^\ast,\dot{q}^\ast),Z)^\top k(Z,Z)^{-1} L_\imod(Z) = 0
\]
for some $(q^\ast,\dot{q}^\ast) \in TQ$,
the system consists of $n \cdot K + 2$ equations for $M$ unknowns. Here $K$ is the number of triples $(q_{j-1},q_j,q_{j+1})$ from the training data. 
Now a least-square or minimal norm solution of the linear system is computed. This means we have computed predictions for $k(Z,Z)^{-1} L_\imod(Z)$ at points $ Z = \begin{pmatrix}	z_1,& \ldots, & z_M \end{pmatrix} \subset TQ$ based on the available training data triples on $Q$.

In computations, we chose $Z$ to consists of the points in $TQ$ for which the training data provides maximal information and then predict any further values of $L_\imod$ or its derivatives using \eqref{eq:LimodExpect} or derivatives thereof.
In case of the midpoint rule, we chose $Z= \left( \frac{q_j+q_{j+1}}{2} , \frac{q_{j+1}-q_j}{h} \right)_{(q_j,q_{j+1})}$ where $(q_j,q_{j+1})$ runs over all data pairs in the training data set.

\begin{remark}
Notice that it suffices to solve the linear system $(n \cdot K + 2) \times M$ dimensional system for the product $B= k(Z,Z)^{-1} L_\imod(Z)$ and use this expression in \eqref{eq:LimodExpect} or derivatives thereof. A computation of $k(Z,Z)^{-1}$ is not needed.
\end{remark}


\begin{remark}
	As there are more pairs $(q_j,q_{j+1})$ than triples $(q_{j-1},q_j,q_{j+1})$ in the training data set, the (non-homogeneous) linear system is underdetermined.\footnote{Leaving aside some obscure cases.}
	Moreover, the non-triviality condition and normalisation does not fully remove the ambiguity of Lagrangians explained in \cref{sec:ambiguity}. 
	Assuming the existence\footnote{In \cref{sec:Existence} we will prove that $L_\imod$ exists as a formal power series in $h$. Motivated by classical results in backward error analysis (BEA), we conjecture optimal truncation results such that $L_\imod$ exists up to exponentially small terms in the step-size $h$.} of $L_\imod$, we expect the system to be of rank smaller than $n \cdot K + 2$ and compute a minimal norm solution.
\end{remark}

\subsubsection{Comparison - Lagrangian Gaussian Process (LGP)}\label{sec:LGP}

For comparison, we also develop a method which can learn the Lagrangian when not only position but also velocity and acceleration data has been observed. The following can be seen as a Gaussian process version of Lagrangian neural networks (LNN) \cite{LNN}. However, in contrast to the approach taken in \cite{LNN}, our algorithm avoids matrix inversion. In \cref{fig:LimodDiagram} LGP corresponds to learning $L$ with motions computed using the discrete Euler--Lagrange equation $\mathcal{E}_\Delta(L_\Delta)=0$.

\begin{enumerate}
	
	\item
	We model a Lagrangian as a Gaussian Process and impose the (continuous) Euler--Lagrange equations at the training data
	\begin{equation}\label{eq:ELHess}
		\mathcal{E}(L)^k = \frac{\p L}{ \p \dot{q}^k} - \frac{\d }{\d t} \frac{\p L}{ \p \dot{q}^k} = \frac{\p L}{ \p \dot{q}^k} - \sum_{j=0}^n \left( \frac{\p^2 L}{ \p q^j \p \dot{q}^k} \dot{q}^j - \frac{\p^2 L}{ \p \dot{q}^j \p \dot{q}^k} \ddot{q}^j\right)
	\end{equation}
	together with the non-triviality condition and normalisation that we employed for learning the inverse modified Lagrangian. More precisely,
	if the training data is presented as $Z=(z_1,\ldots,z_K) = ((q_1,\dot{q}_1),\ldots,(q_K,\dot{q}_K))$ then
	\[
	r_{j,i}=\nabla_{\dot{q}} k((q_j,\dot{q}_j),z_i)-
	\D^2_{q,\dot{q}} k((q_j,\dot{q}_j),z_i) \dot{q}_j - \D^2_{\dot{q},\dot{q}} k((q_j,\dot{q}_j),z_i) \ddot{q}_j.
	\]
	is a column vector of length $n$. Here $\D^2_{q,\dot{q}} k((q_j,\dot{q}_j),z_i)$ denotes the $n \times n$ matrix $\left( \frac{\p^2 }{\p q^{l_1} \p \dot{q}^{l_2} }k((q_j,\dot{q}_j),z_i) \right)_{l_1,l_2=1}^n$ with a corresponding interpretation of $\D^2_{\dot{q},\dot{q}}$. We then build the $n \times K$ matrix
	\[
	R_j = \begin{pmatrix}
		r_{j,1}, \ldots, r_{j,K}
	\end{pmatrix}
	\]
	which are stacked above each other in the vertical direction to form an $nK \times K$-dimensional matrix. Now the system
	\begin{equation}\label{eq:dataconsistencyGPEL}
		R  k(Z,Z)^{-1}L(Z) = 0
	\end{equation}
	corresponds to the condition that \eqref{eq:ELHess} is fulfilled for all data points.
	
	\item
	Finally \eqref{eq:dataconsistencyGPEL} together with the same non-triviality and normalisation condition as employed for learning the inverse modified Lagrangian is solved in the least square sense for $B=k(Z,Z)^{-1}L(Z) \in \R^{nK}$. Notice that a computation of $k(Z,Z)^{-1}$ is not required.

	\item
	The learned Lagrangian $L$ can be evaluated at a point $(q,\dot{q})$ as $L(q,\dot{q}) = k((q,\dot{q}),Z)B$.

\end{enumerate}

For comparison, a neural network version of the above idea would employ a loss function of the form
$\ell = \|\ell_{\mathrm{nontrivial}}\|^2+\ell_{\mathrm{data}}$ with $\ell_{\mathrm{nontrivial}}$ as in \eqref{eq:normaliseLimodQuad} with $L$ instead of $L_\imod$ and 
\[\ell_{\mathrm{data}} = \sum_{i=1}^{K} \sum_{k=1}^{n}\left\| \frac{\p L}{ \p \dot{q}^k}(q_i,\dot{q}_i) - \sum_{j=0}^n \left( \frac{\p^2 L}{ \p q^j \p \dot{q}^k}(q_i,\dot{q}_i) \dot{q_i}^j - \frac{\p^2 L}{ \p \dot{q}^j \p \dot{q}^k}(q_i,\dot{q}_i) \ddot{q_i}^j\right)\right\|^2,
\]
which corresponds to enforcing the Euler--Lagrange equations to be consistent with the training data.

\begin{remark}[Comparison with LNN]
In contrast, Lagrangian neural networks (LNN), as introduced in \cite{LNN}, use a loss function based on comparing the acceleration data $\ddot{q_i}$ with 
\[
\left(\left(\frac{\p^2 L}{ \p \dot{q}^j \p \dot{q}^k} (q_i,\dot{q}_i) \right)_{j,k=1}^n\right)^{-1} \left( \nabla_{q} L (q_i,\dot{q}_i) - \left( \frac{\p^2 L}{ \p q^j \p \dot{q}^k}(q_i,\dot{q}_i) \right)_{j,k=1}^n \dot{q}_i \right).
\]

In LNN the implicit assumption that $\left(\frac{\p^2 L}{ \p \dot{q}^j \p \dot{q}^k} (q_i,\dot{q}_i) \right)_{j,k=1}^n$ is invertible, informs the network that the Lagrangian is regular. The regularity assumption prohibits the LNN to learn the constant solution $L\equiv \mathrm{const}$ which would be consistent with the data but does not generate any dynamics. In contrast, our approach avoids the matrix inversion and imposes non-triviality by a term which normalises the symplectic volume of a unit hypercube in the space with coordinates $(q,\dot{q})$. We can, therefore, control the scaling of the learned Lagrangian and the corresponding symplectic structure and Hamiltonian.
\end{remark}

\section{Numerical Experiments}\label{sec:ex}

In the following, we will demonstrate the performance of Lagrangian Shadow Integration (LSI) on position data of the mathematical pendulum and of the H{\'e}non--Heiles system.
Within LSI we will use the second order accurate variational midpoint discretisation introduced in \cref{ex:MPandTZ} as a base method.
All integrations required to compute the training data set are performed by applying the St{\"o}rmer-Verlet scheme with a very small step-size to the exact equations of motion. After integration, we retain only position data, since LSI uses position data only and does not require observations of velocities, accelerations, or prior knowledge about conjugate momenta.
Based on the training data, the inverse modified Lagrangian is modelled as a Gaussian process and learned as described in \cref{subsec:GPLearnimodL}.

For comparison, we employ Lagrangian Gaussian Processes (LGPs) (see \cref{sec:LGP}) whose training targets are the exact Lagrangian. The learned Lagrangians are then integrated using the variational midpoint rule with the same step-size as we use for LSI. Velocity and acceleration data for training LGP are interpolated using central finite differences. The approximations are second order accurate in the discretisation parameter $h$. Notice that this reduces the available training data since no second order accurate acceleration data is available at the first two and last two snapshots of each trajectory. 

All Gaussian Processes (GP) employed in this section use scaled radial basis functions
\begin{equation}
\label{eqs:rbf}
k(x,y)= c_k \exp \left( - \frac {\|x-y\|^2} {\e^2} \right)
\end{equation}
with parameters $\e$ related to the typical length scale and $c_k$ acting as a scaling factor.

\subsection{Pendulum}

The mathematical pendulum is the dynamical system defined by the second order ordinary differential equations $\ddot{q}=-\sin(q)$, which is the Euler--Lagrange equation to $L_\reff(q,\dot{q}) = \frac 12 \dot{q}^2 + \cos(q)$. Motions preserve the energy $H_\reff(q,\dot{q}) = \frac 12 \dot{q}^2 - \cos(q)$. We consider the step-size $h=0.5$. 
To generate training data, we compute $N=400$ trajectories of length $N_l=6$ with initial values $(q,\dot{q})$ obtained from a Halton sequence on the domain $\Omega = [-\pi,\pi] \times [-1.2,1.2]$. Only the position data in $[-\pi,\pi]$ is retained.

In the following, the parameters of the radial basis functions \eqref{eqs:rbf} of the GPs within LSI and LGP are set to $\e =5$ and $c_k=1$.
\begin{figure}
	\centering
\includegraphics[width=0.45\textwidth]{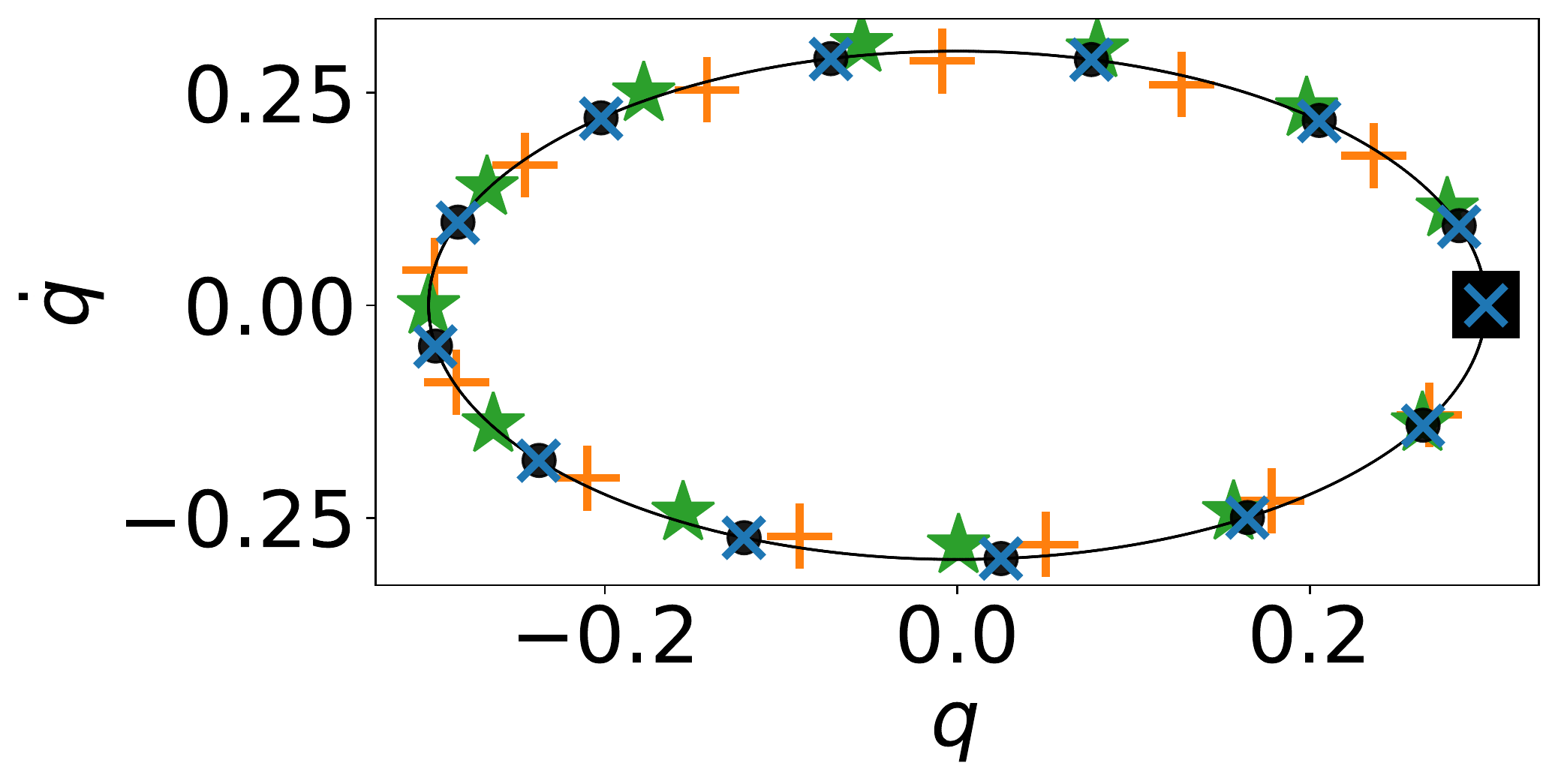}
\includegraphics[width=0.45\textwidth]{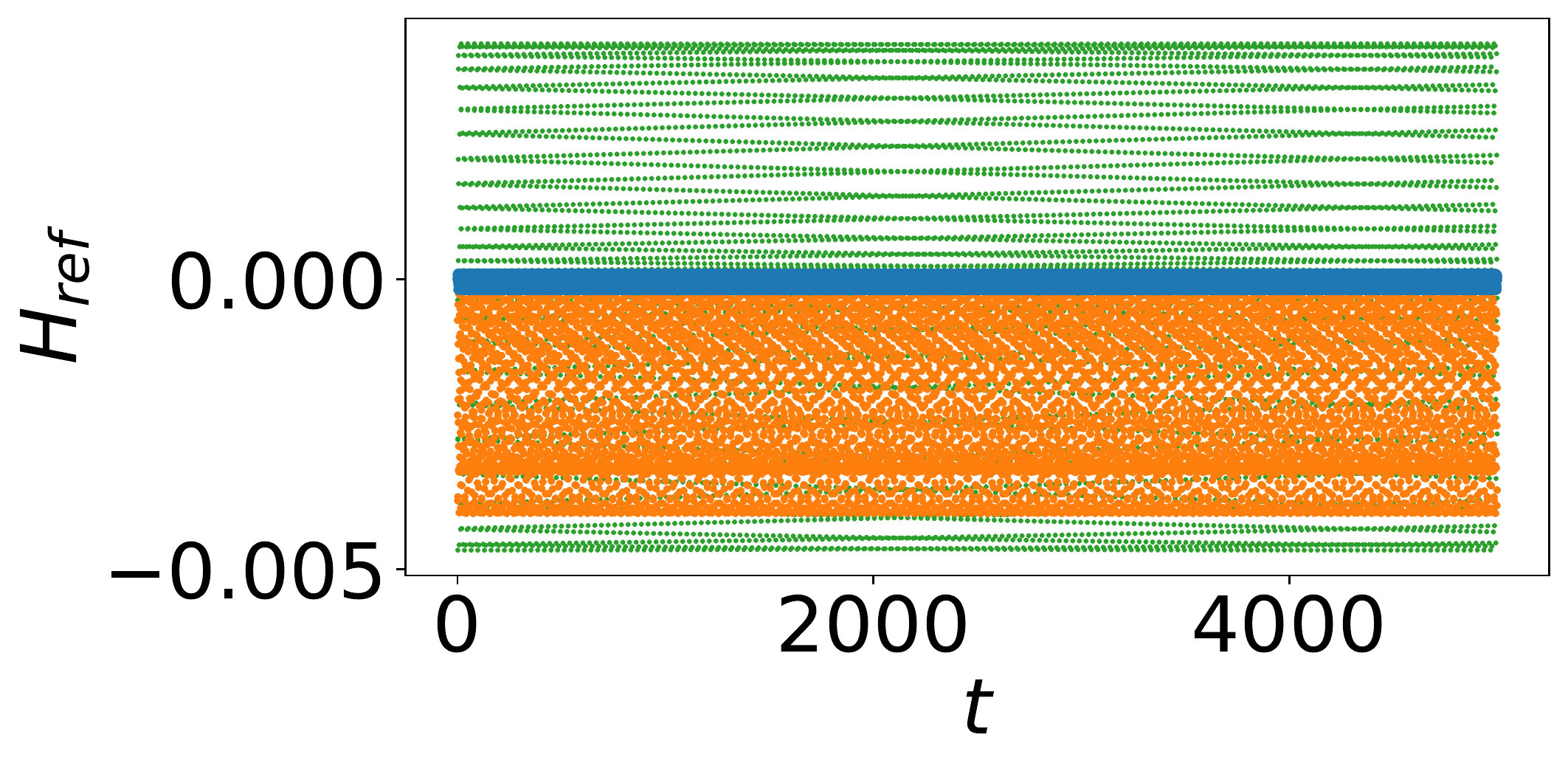}
\caption{Left: The first 13 snapshots to the step-size $h=0.5$ of a trajectory initialised at $(q,\dot{q})=(0.3,0)$ ($\blacksquare$) computed with LSI ($\color{blue}\times$), with LGP ($\color{orange}+$), and with LGPExact ($\color{green}\ast$) are shown. A reference solution is given in solid black, with position and velocity at the snapshot times marked by $\bullet$. Right: Energy behaviour of LSI (blue), LGP (orange), and LGPExact (green) (subsampled). }\label{fig:PendulumSnapshot}
\end{figure}
\Cref{fig:PendulumSnapshot} shows snapshot data of a trajectory computed with LSI as well as with LGP, whereas for LGP all required velocity and acceleration data was approximated using central finite differences. For additional comparison, we plot snapshot data of a trajectory computed with LGPExact. LGPExact is a Lagrangian Gaussian Process as well. However, it is trained with exact velocity and acceleration data. Although this violates our assumption that only position data is available, the experiment is included as it helps us to analyse the effects of discretisation errors in LGP. For training of LGPExact we used the same position data as for LSI. Acceleration data was computed from the differential equation $\ddot q = - \sin(q)$ of the pendulum. Since the right hand side of the differential equation does not explicitly depend on $\dot q$, velocity data was obtained as a Halton sequence of length $N \cdot N_l =2400$ on the interval $(-1.2,1.2)$.

While all methods preserve Lagrangian structure, predictions by LGPExact contain the discretisation error induced by the variational midpoint rule, which is considerable at $h=0.5$ even though the scheme is second order accurate. Additionally, LGP contains another discretisation error, since its training data was obtained using central finite differences. LSI is designed to compensate both discretisation errors and successfully matches the snapshot data of the reference solution.
 
Further, we plot in \cref{fig:PendulumSnapshot} the long-term energy behaviour of the numerical trajectories, i.e.\ we evaluate $H_\reff$ on the computed snapshot data.
All methods show oscillatory energy error behaviour, which is expected, since all methods preserve the variational structure.
We observe that the oscillations of LSI are smaller than those of LGP and LGPExact. This is expected since only LSI contains no inherent discretisation error. Interestingly, a comparison of the energy errors of LGPExact and LGP suggests that in LGP discretisation errors from training and the application of the variational midpoint rule compensate one another partially.

To analyse the quality of predictions obtained by LSI, it is convenient to pass to the Hamiltonian setting due to the non-uniqueness of Lagrangians. We will, therefore, focus on the modified energy $H^{[[k]]}$ preserved by the scheme. Up to errors due to limited training data and truncation errors, we expect the modified energy to have the same level sets as $H_\reff$. To compute $H^{[[k]]}$, first $L^{[[k]]}$ is computed from the learned inverse modified Lagrangian $L_\imod$ using the backward error analysis formula \eqref{eq:MPBEA}, where $k$ denotes the truncation index (also see \cref{thm:VarBEA}). The modified Hamiltonian $H^{[[k]]}$ is then computed using \eqref{eq:HamIntro} applied to $L^{[[k]]}$ (instead of $L$).
\begin{figure}
	\centering
\includegraphics[width=0.9\textwidth]{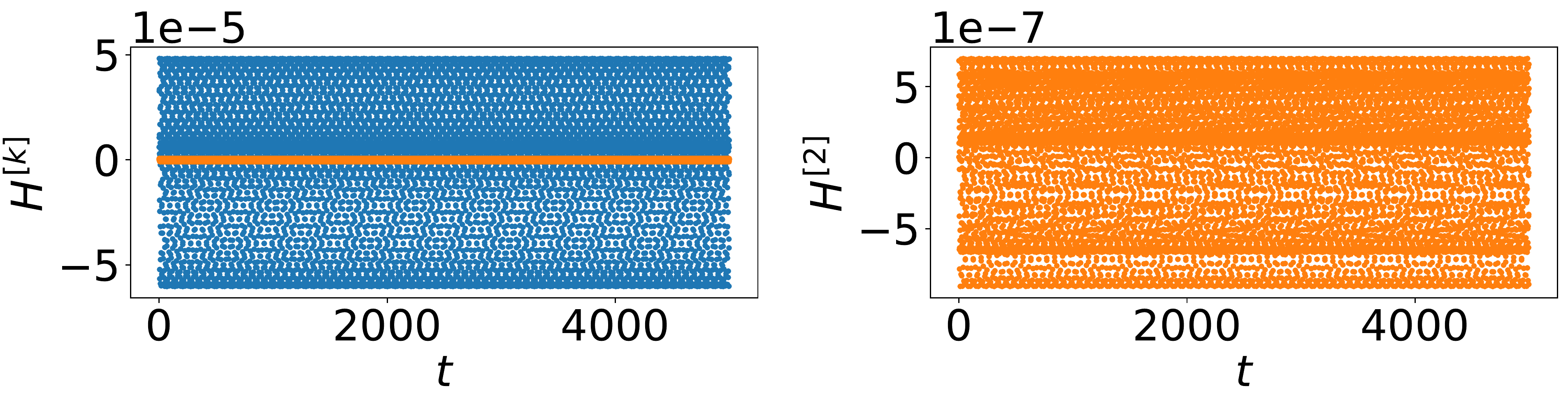}
\caption{$H^{[[0]]}$ (blue) and $H^{[[2]]}$ (orange) are up to an oscillation within a band of width $\approx 10^{-4}$ and $\approx 10^{-6}$, respectively, conserved quantities of the numerical motion predicted by LSI initialised at $(q,\dot{q})=(0.3,0)$. $H^{[2]}$ can, therefore, be used to perform system identification or to verify LSI.}\label{fig:PendulumModEnergyLSIMotion}
\end{figure}
\Cref{fig:PendulumModEnergyLSIMotion} shows that $H^{[[2]]}$ governs the numerical motions well as it is preserved up to an oscillation within a band of width $\approx 10^{-6}$, which is small enough for our purposes. We can, therefore, use $H^{[[2]]}$ to analyse qualitative aspects of numerical motions predicted by LSI: up to truncation error, for a modified symplectic structure $\omega^{[[2]]} = \frac{\p L^{[[2]]}}{\p \dot{q}} \wedge q$ the function $H^{[[2]]}$ is the Hamiltonian which governs the numerical motion. 
\begin{figure}
\includegraphics[width=0.3\textwidth]{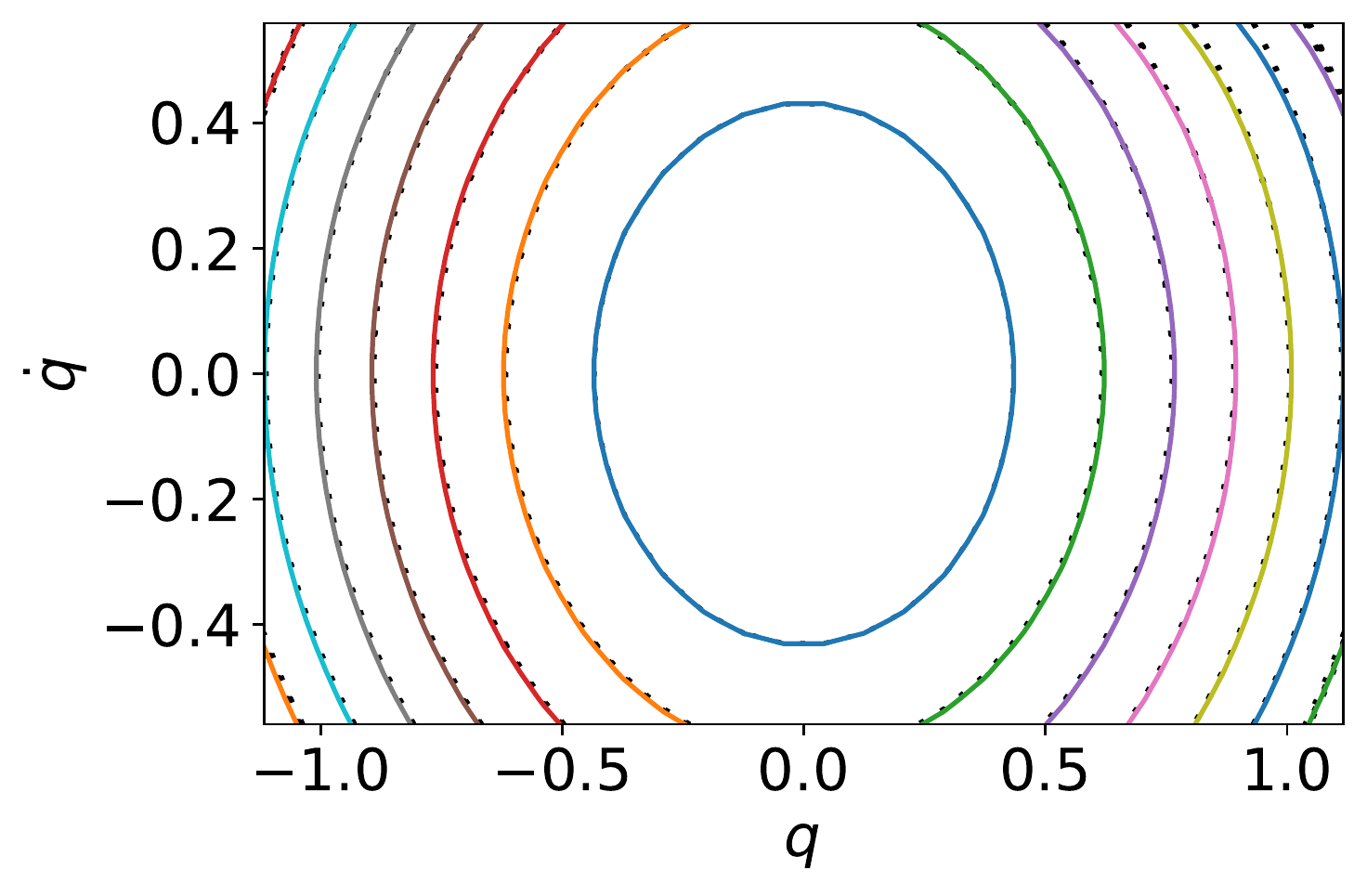}
\includegraphics[width=0.3\textwidth]{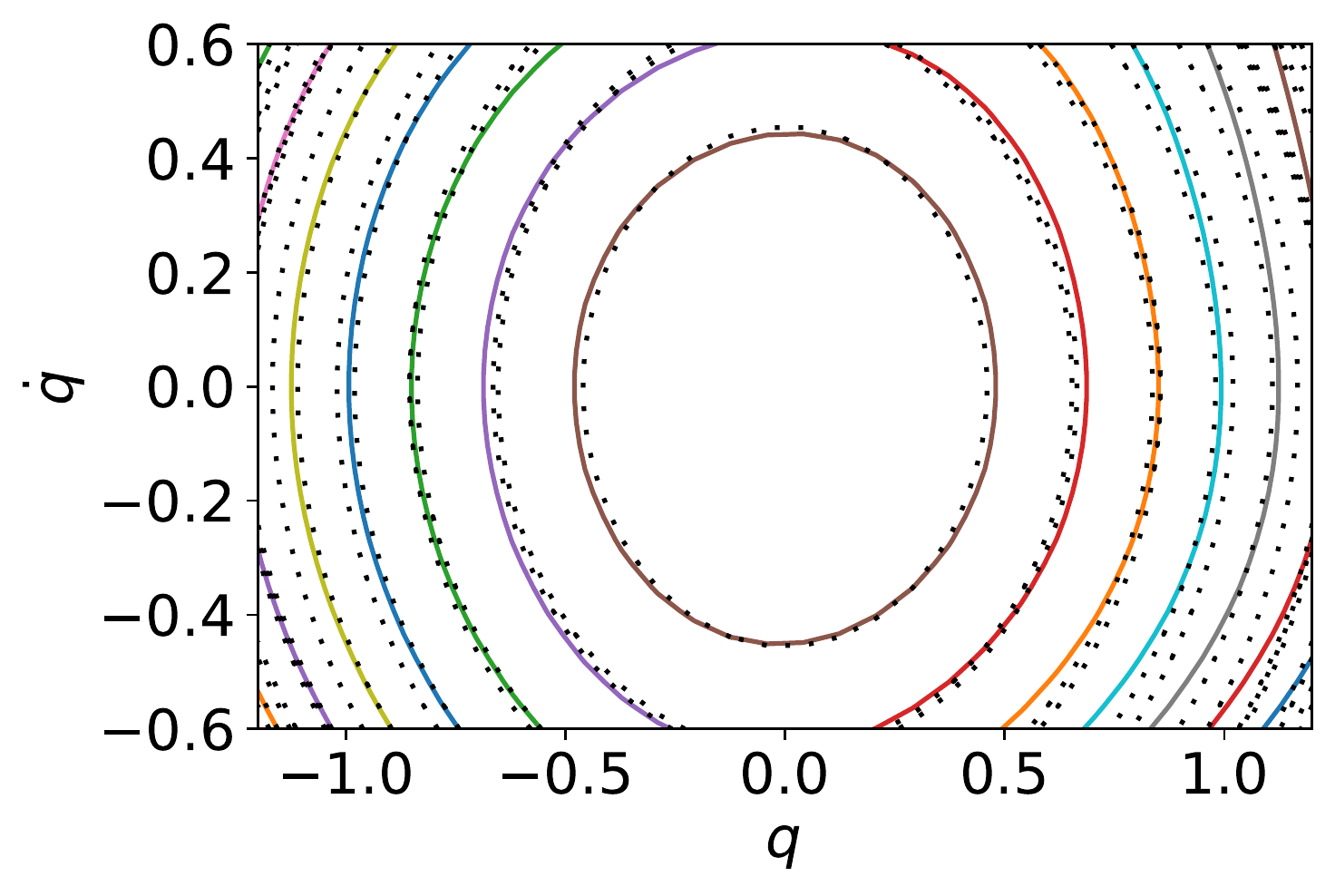}
\includegraphics[width=0.3\textwidth]{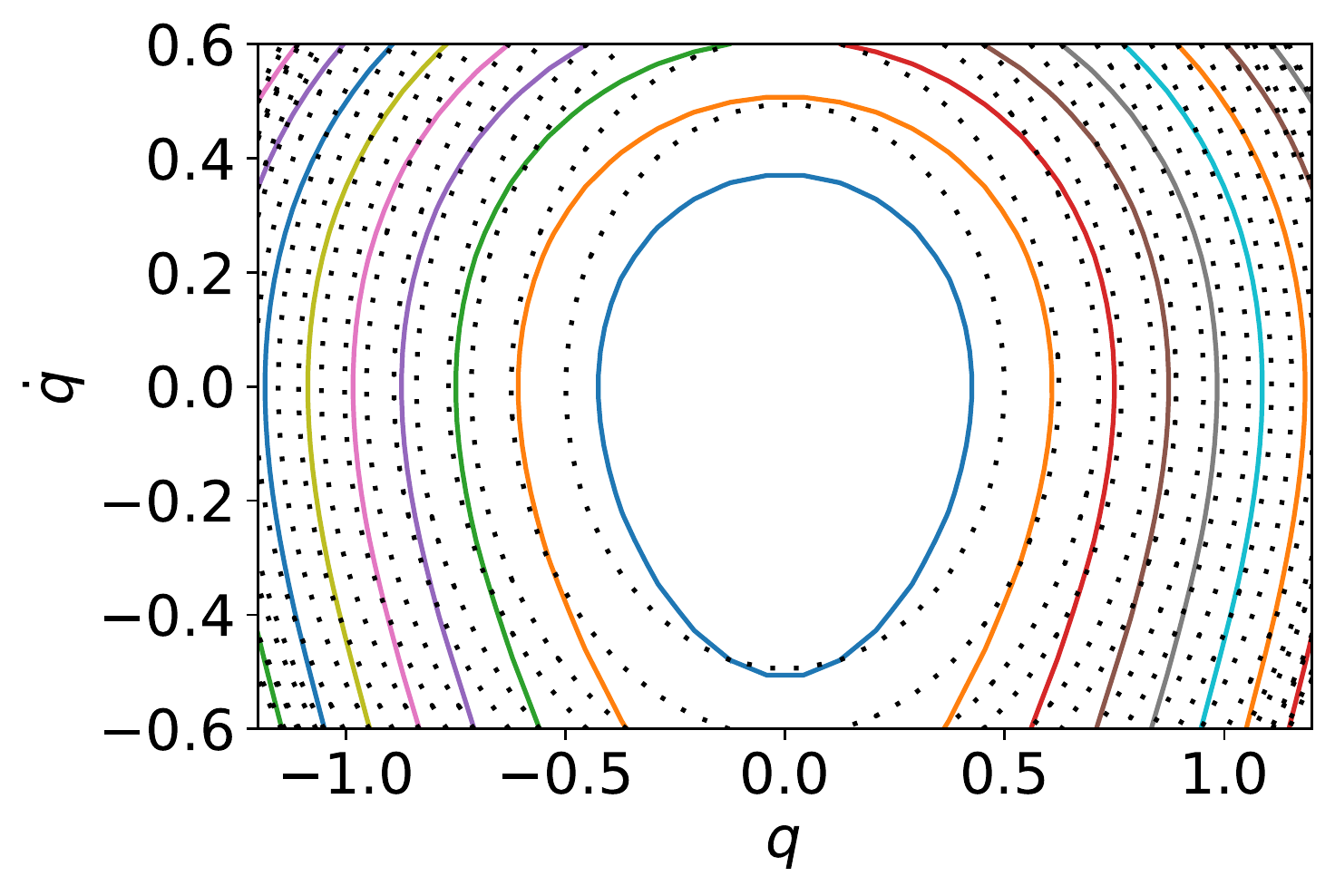}
\caption{Contour plot of the Hamiltonian which governs the numerical motion for LSI (left), LGP (centre), and LGPExact (right). Contours of $H_\reff$, which intersect the solid contour lines, are plotted as dotted lines for reference.}\label{fig:PendulumRecoverPhase}
\end{figure}
The contour plot \cref{fig:PendulumRecoverPhase} shows that the level sets of $H_\reff$ closely match the level sets of $H^{[[2]]}$. Analogous plots for LGP and LGPExact are given for comparison. We see that even in long-term simulations the numerical motions obtained by LSI show qualitatively the correct behaviour.
Motions of LGP and LGPExact, however, will have a pronounced systematic bias.

\begin{figure}
	\includegraphics[width=0.3\textwidth]{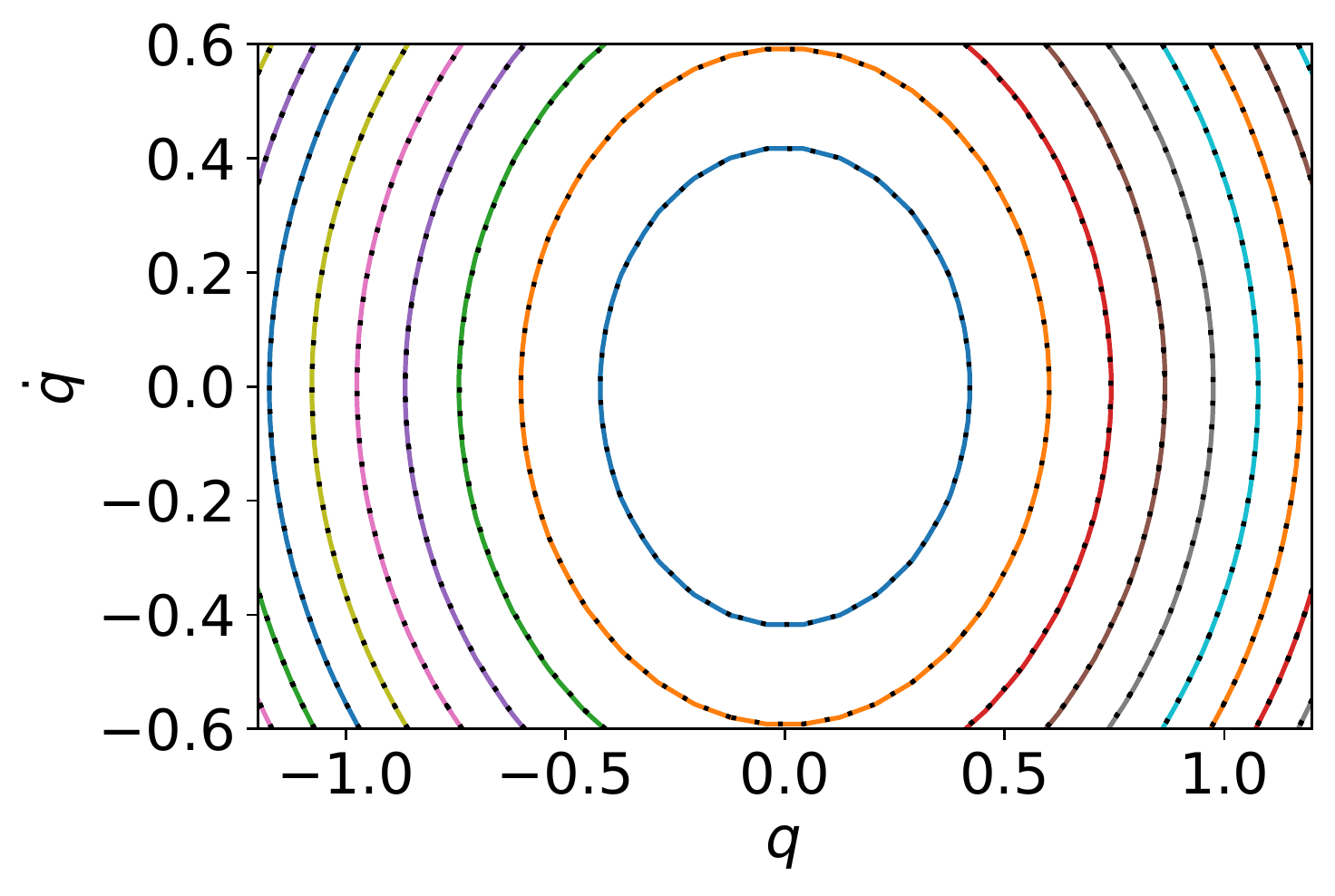}
	\includegraphics[width=0.3\textwidth]{PendulumGPELexBEA.pdf}
	\includegraphics[width=0.3\textwidth]{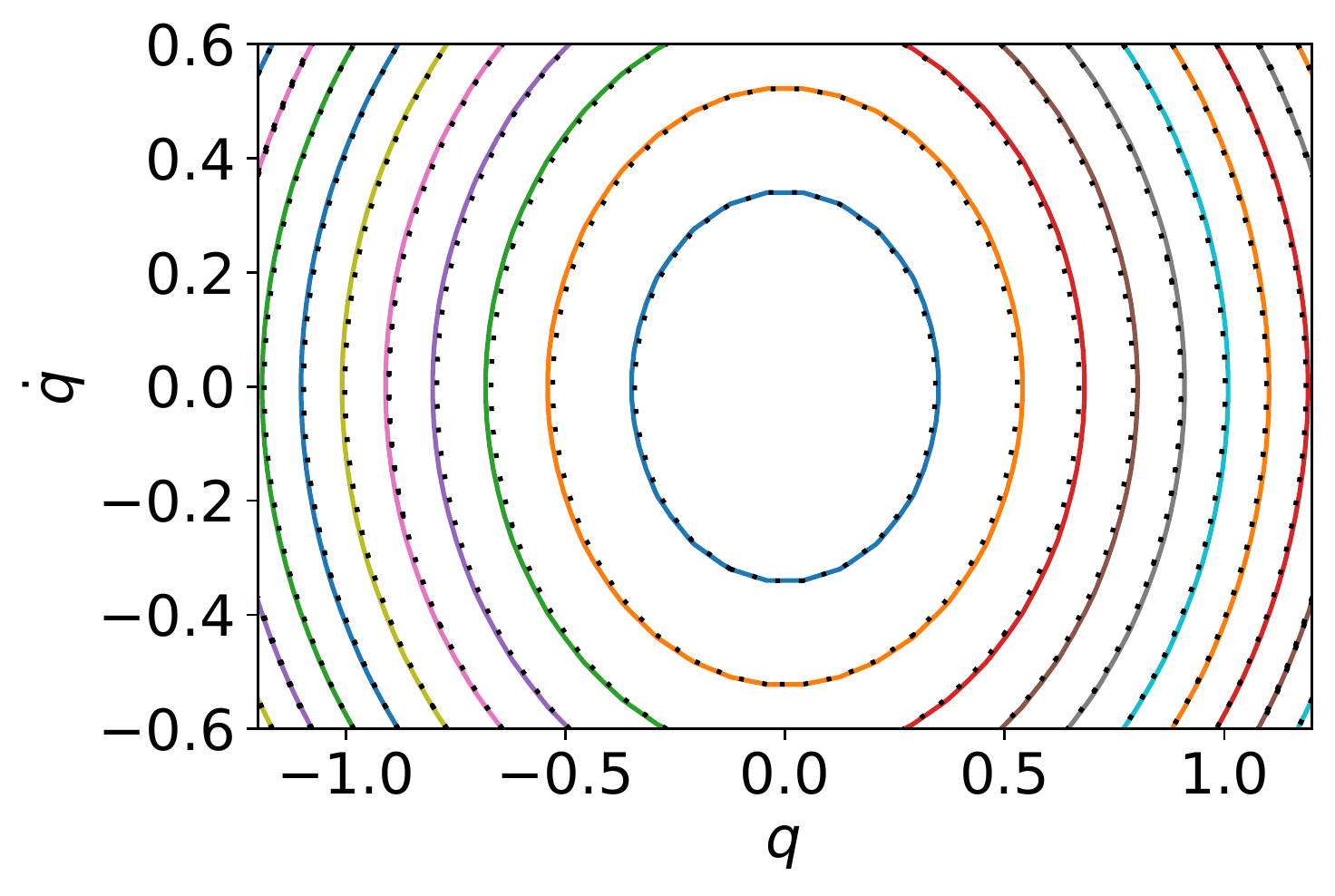}
	\caption{Contour plot of the Hamiltonian identified using LGPExact (left), the Hamiltonian governing the numerical motions when $L_{\mathrm{LGPExact}}$ is integrated (centre), and when the reference Lagrangian $L_{\mathrm{ref}}$ is integrated (right). This shows that although $L_{\mathrm{LGPExact}}$ is a highly accurate Lagrangian for the dynamical system, it is not well suited for numerical integration.}\label{fig:PendulumPhaseBEA}
\end{figure}

\begin{remark}
It might appear surprising that LGPExact performs quite poorly: the Lagrangian $L_{\mathrm{LGPExact}}$ learned by LGPExact is consistent with the dynamical system (\cref{fig:PendulumPhaseBEA} -- left). However, $L_{\mathrm{LGPExact}}$ is not very suitable for numerical integration: the numerical motions follow the contour lines of the centre plot of \cref{fig:PendulumPhaseBEA}. 
For comparison, for the reference Lagrangian $L_\mathrm{ref}$, the numerical motions follow energy level sets that closely match the level sets of $H_\mathrm{ref}$.
This shows that even if highly accurate velocity data and acceleration data is available, Shadow Integration can be worthwhile
as it compensates discretisation errors in the integration step also for Lagrangians that would otherwise not be well suited for numerical purposes.
\end{remark}


Recall that in this context Hamiltonians are defined only up to additive constants and scaling factors by the dynamics since the symplectic structure is not fixed. To quantify the difference between the level sets of $H_\reff$ and $H^{[[2]]}$ we measure how parallel their gradients are. More precisely, we measure the area $\mu(q,\dot{q})$ of the parallelograms spanned by the normalised gradients $\nabla H_\reff/\|\nabla H_\reff\|$ and $\nabla H^{[[2]]}/\| H^{[[2]]}\|$ at $(q,\dot{q})$. In this 2-dimensional example, we have
\[
\mu(q,\dot{q})=\left|\det\left( \frac{\nabla H_\reff(q,\dot{q})}{\|\nabla H_\reff(q,\dot{q})\|},\frac{\nabla H^{[[2]]}(q,\dot{q})}{\| \nabla H^{[[2]]}(q,\dot{q})\|} \right)\right|.
\]
We sum $\mu$ over an equidistant mesh with $30 \times 30$ data points on the domain $\Omega_0 = [-1.2,1.2]\times[-0.6,0.6]$ and obtain after division by the number of data points $\nu_{\mathrm{LSI}}^{\mathrm{BEA}} \approx 0.01$. For LGP we obtain $\nu_{\mathrm{LGP}}^{\mathrm{BEA}} \approx 0.05$ and $\nu_{\mathrm{LGPExact}}^{\mathrm{BEA}} \approx 0.1$ for LGPExact. It is interesting to observe that LGPExact performs significantly worse than LGP. Again, this suggests that some of the inherent discretisation errors of LGP compensate each other but to a lesser extend than in the systematic approach of the LSI framework.

When system identification is the goal rather than an analyis of the numerical motions obtained by applying the variational midpoint rule to a Lagrangian, for LGP and LGPExact we can compute $H_{\mathrm{LGP}}$ and $H_{\mathrm{LGPExact}}$ by applying \eqref{eq:HamIntro} to the learned Lagrangians $L_{\mathrm{LGP}}$ and $L_{\mathrm{LGPExact}}$ directly rather than applying the backward error analysis formula \eqref{eq:MPBEA} first (which is necessary when LSI is used). In this context LGP only suffers from a discretisation error in the training data, while in the experiment of \cref{fig:PendulumSnapshot} an additional discretisation error of the variational midpoint rule is present. We get $\nu_{\mathrm{LGP}} \approx 0.03$. Compared to $\nu_{\mathrm{LSI}}=\nu_{\mathrm{LSI}}^{\mathrm{BEA}} \approx 0.01$ this shows that LSI has an edge over LGP when it comes to system identification. Moreover, we can conclude that even if $L_{\mathrm{LGP}}$ was integrated with tiny time-steps in the motion prediction test, solutions would improve but cannot reach the quality of LSI predictions.

Besides, LGPExact does not suffer from any discretisation error in this test and we obtain $\nu_{\mathrm{LGPExact}} \approx 3.4 \cdot 10^{-5}$, which is by magnitudes better than the value for LGP and LSI. However, LGPExact was trained with exact velocity and acceleration data, which might not be available.

\subsection{H{\'e}non-Heiles} The conservative motion of a particle of unit mass subject to the potential
\[
V(q) = \frac 12 \|q\|^2 + \alpha \left( q_1^2 q_2 - \frac{q_2^3}{3} \right)
\]
is governed by the Euler--Lagrange equations \eqref{eq:ELIntro} for the Lagrangian $L(q,\dot{q}) = \frac 12 \| \dot{q}\|^2 - V(q)$.
The energy is given by $H(q,\dot{q}) = \frac 12 \|\dot{q}\|^2+V(q)$ and is conserved along motions. The potential $V$ has two critical values $c_0=0$ and $c_1=\frac 1 {6 \alpha^2}$. The set $V^{-1}(c_1)$ separates the phase space into six unbounded components and one bounded component $A$ containing the origin \cite{Henon1964}.
(For a contour plot of $V$ see \cref{fig:HenonPotential}.)

In the following numerical experiments we used $\alpha=0.8$ and the step-size $h=0.1$. Training was performed on $N=200$ trajectories of length $N_l=5$. The initial values of the trajectories were obtained from a Halton sequence on the domain $\Omega = [-0.8,0.8]^4$. Again, velocity data was discarded and only the position data of the training set is used in the following.
\begin{figure}
\includegraphics[width=0.3\textwidth]{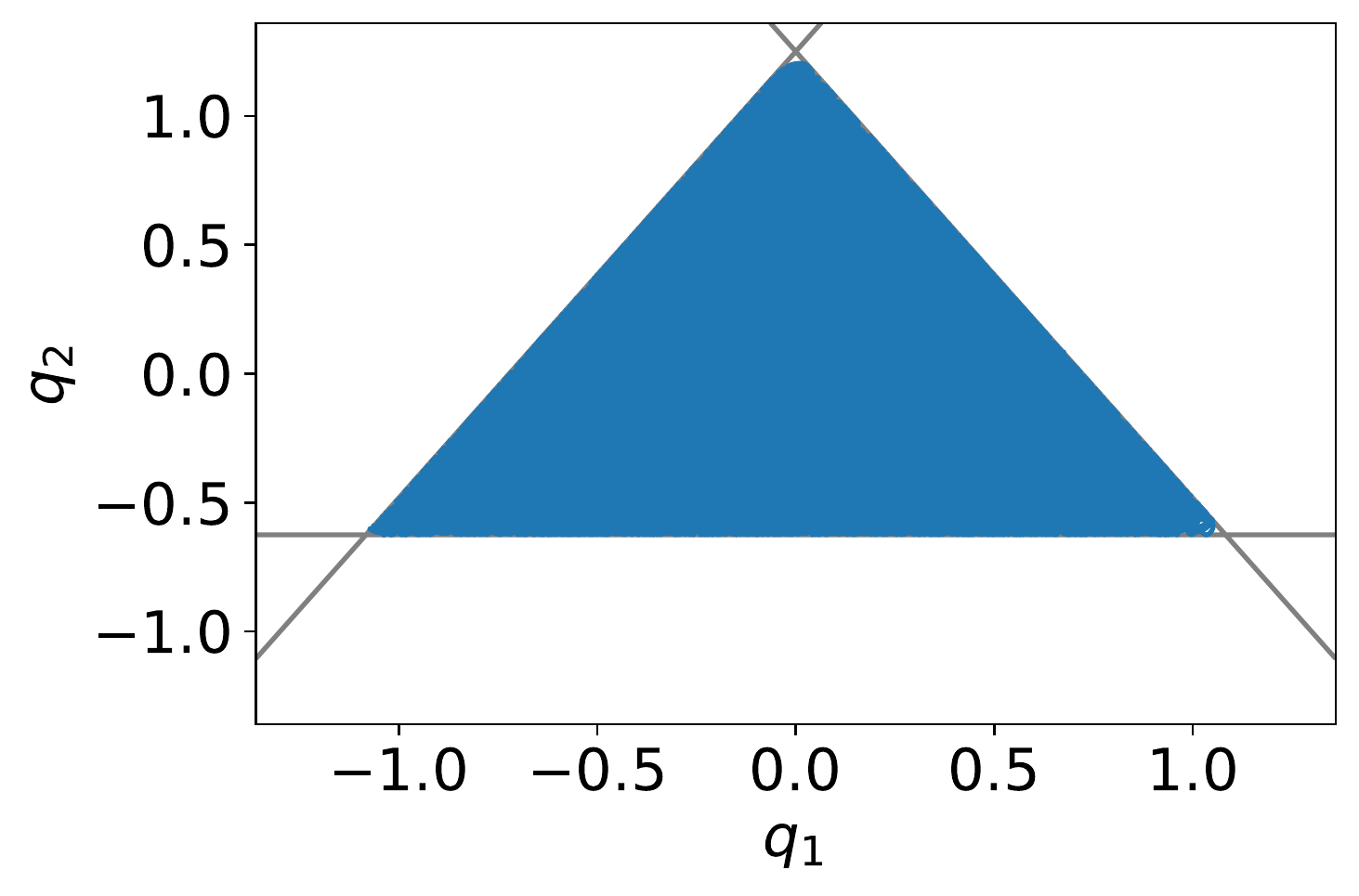}
\includegraphics[width=0.3\textwidth]{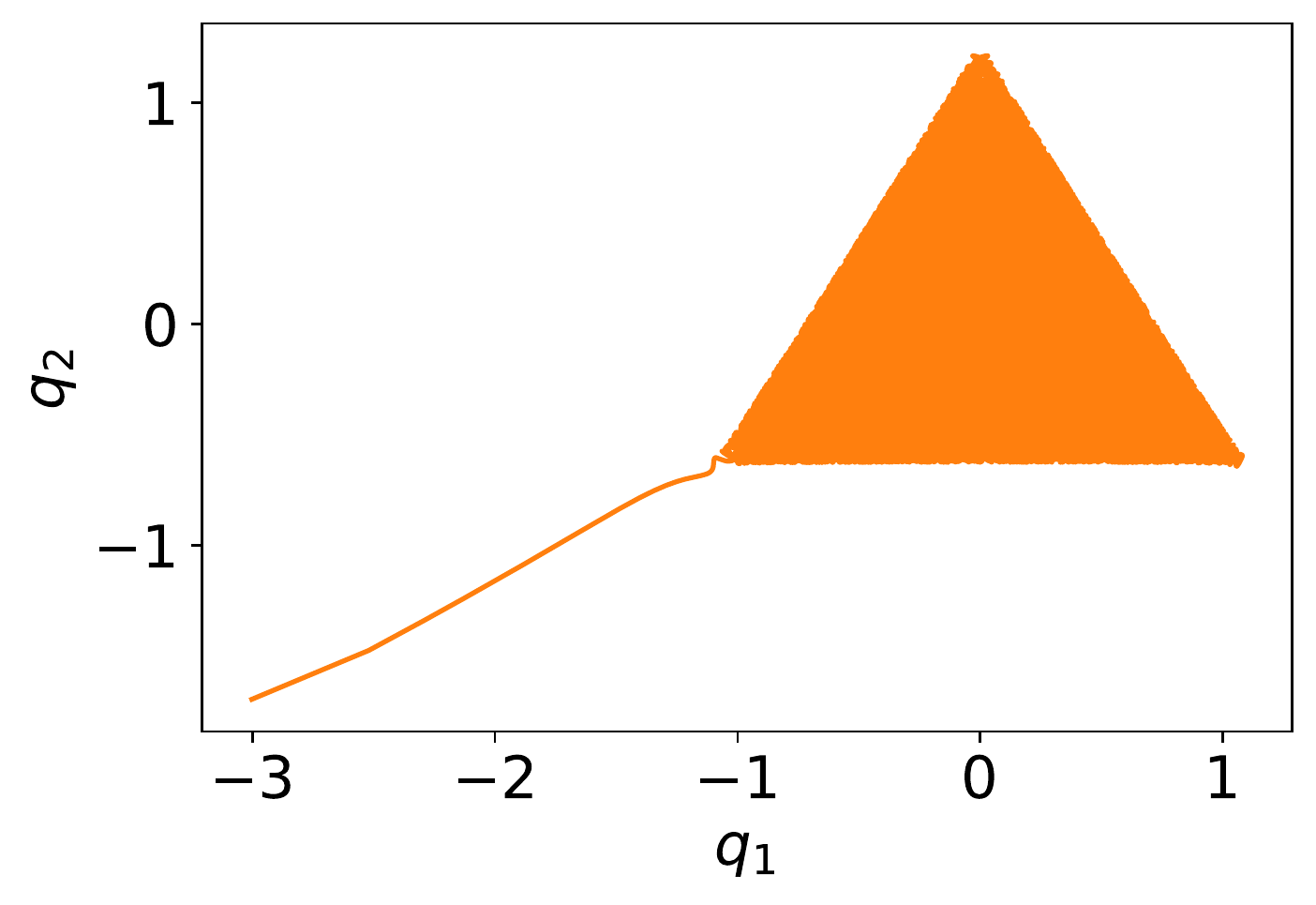}
\includegraphics[width=0.3\textwidth]{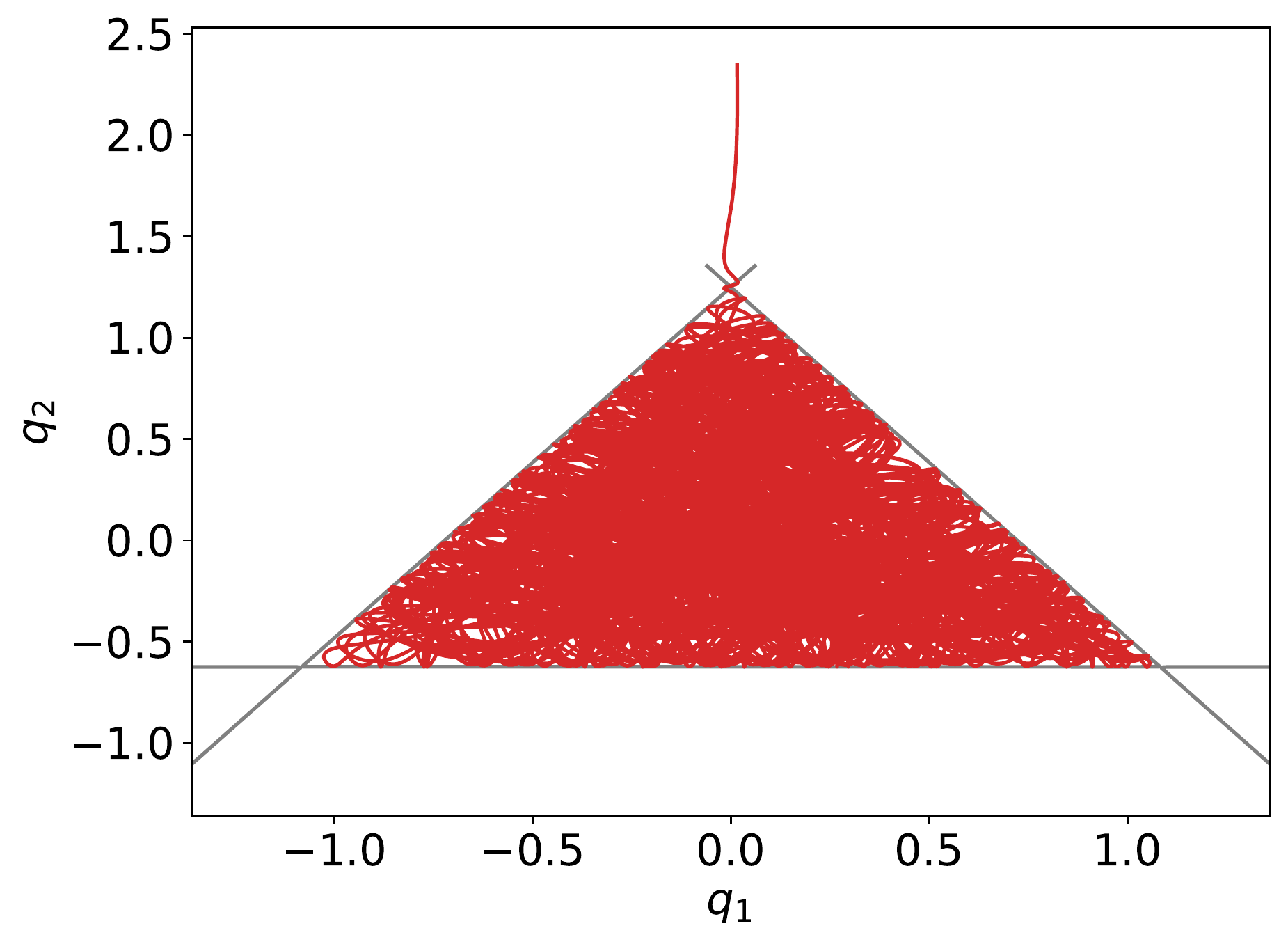}
\caption{Trajectory initialised at $(q_0,\dot{q}_0)=(0.675499,0.08,0,0)$ predicted by LSI until $T=1.774 \cdot 10^4$ (left), by LGP until $T=7.069 \cdot 10^3$ (centre), and by a GP directly fitted to approximated flow data until $T=1.4574 \cdot 10^3$ (GPFlow). All but the LSI prediction stay in the correct region of the phase space until time $T=1.774 \cdot 10^4$.}\label{fig:HenonTrjs}
\end{figure}
\Cref{fig:HenonTrjs} shows predictions of trajectories computed by LSI and by LGP. For the underlying GPs we used radial basis functions \eqref{eqs:rbf} with parameters $c_k=1$ and $\e = 10$. For LGP missing velocity and acceleration data was approximated using second order accurate central finite differences. For further comparison, we compute flow map data by adding velocity data to the position data from the training data set with central finite difference approximations.
Then a GP is fitted to the flow map data using scaled radial basis functions in combination with hyperparameter fitting employing the Python package {\tt scikit-learn}. This method does not incorporate geometric structure and will be referred to by GPFlow.
The trajectories in \cref{fig:HenonTrjs} are initialised at $(q_0,\dot{q}_0)=(0.675499,0.08,0,0)$. Their energy $H(q_0,\dot{q}_0) = V(q_0) \approx 0.26041605$ is close to the critical value of $c_1 = 0.260416\overline{6}$ such that high energy accuracy is required in numerical simulations to prevent erroneous divergence.
While GPFlow, LGP, and LSI all diverge eventually, the LSI prediction shows the correct behaviour (an orbit densely filling the area bounded by $V^{-1}(q_0)$) for much longer than LGP or GPFlow.

\begin{figure}
	\centering
\includegraphics[width=0.8\textwidth]{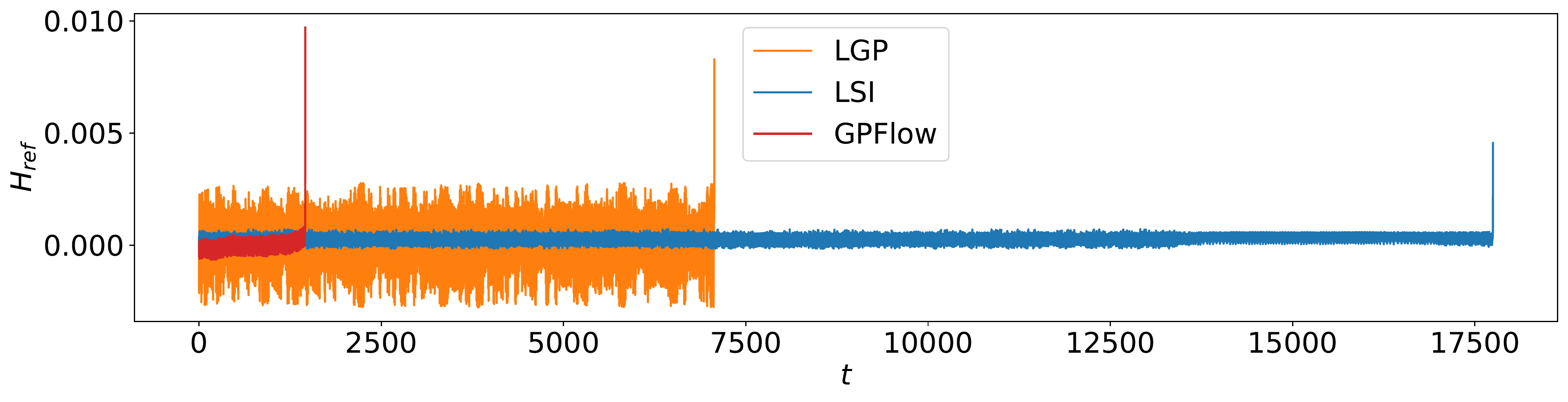}
\caption{Energy behaviour of the trajectories from \cref{fig:HenonTrjs}. All trajectories diverge eventually but the LSI prediction diverges much later than GPFlow and LGP due to its better energy preservation properties.}\label{fig:HenonEnergy}
\end{figure}
The energy error plot (\cref{fig:HenonEnergy}) confirms that LSI preserves energy most accurately, while the energy behaviour of GPFlow shows a drift as no Lagrangian or Hamiltonian structure has been enforced. The steady energy growth causes the trajectory to enter a section of the phase space where trajectories diverge. Although LGP does not suffer from a steady energy growth since variational structure is enforced, its energy error oscillations are wider than those of LSI because discretisation errors in the training process and by the application of variational midpoint integration do not get corrected. The wider oscillations cause the trajectory to diverge erroneously much earlier than the LSI prediction which conserves energy more accurately.

We again use backward error analysis and compute the modified system whose exact motion (up to truncation error) coincide at the snapshot times with the numerical predictions by LSI. As in the previous experiment, we apply backward error analysis to the learned inverse modified Lagrangian $L_\imod$ (truncating after second order terms) and then calculate $H^{[[2]]}_{\mathrm{LSI}}$. 
\begin{figure}
	\centering
\includegraphics[width=0.4\textwidth]{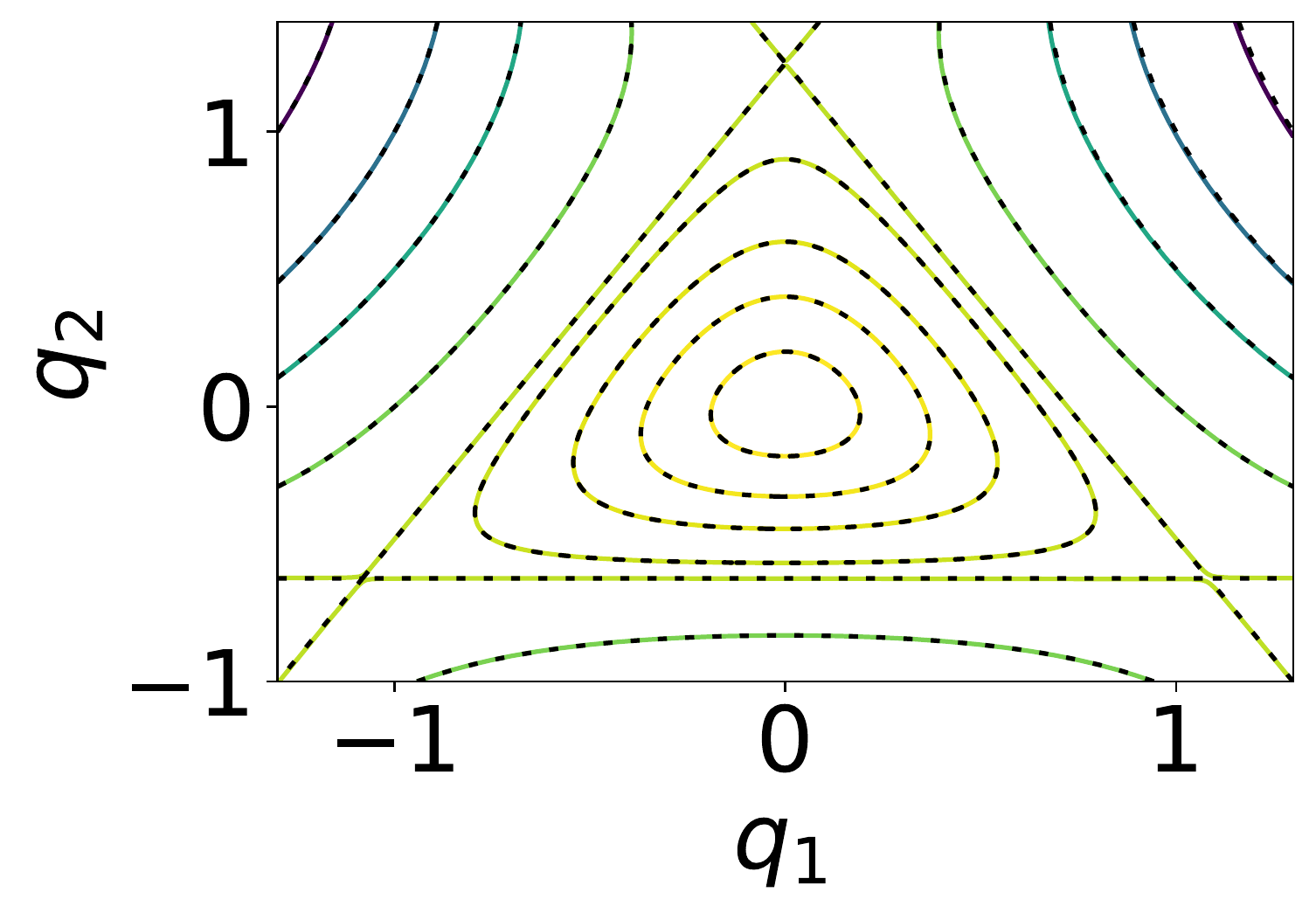}
\includegraphics[width=0.4\textwidth]{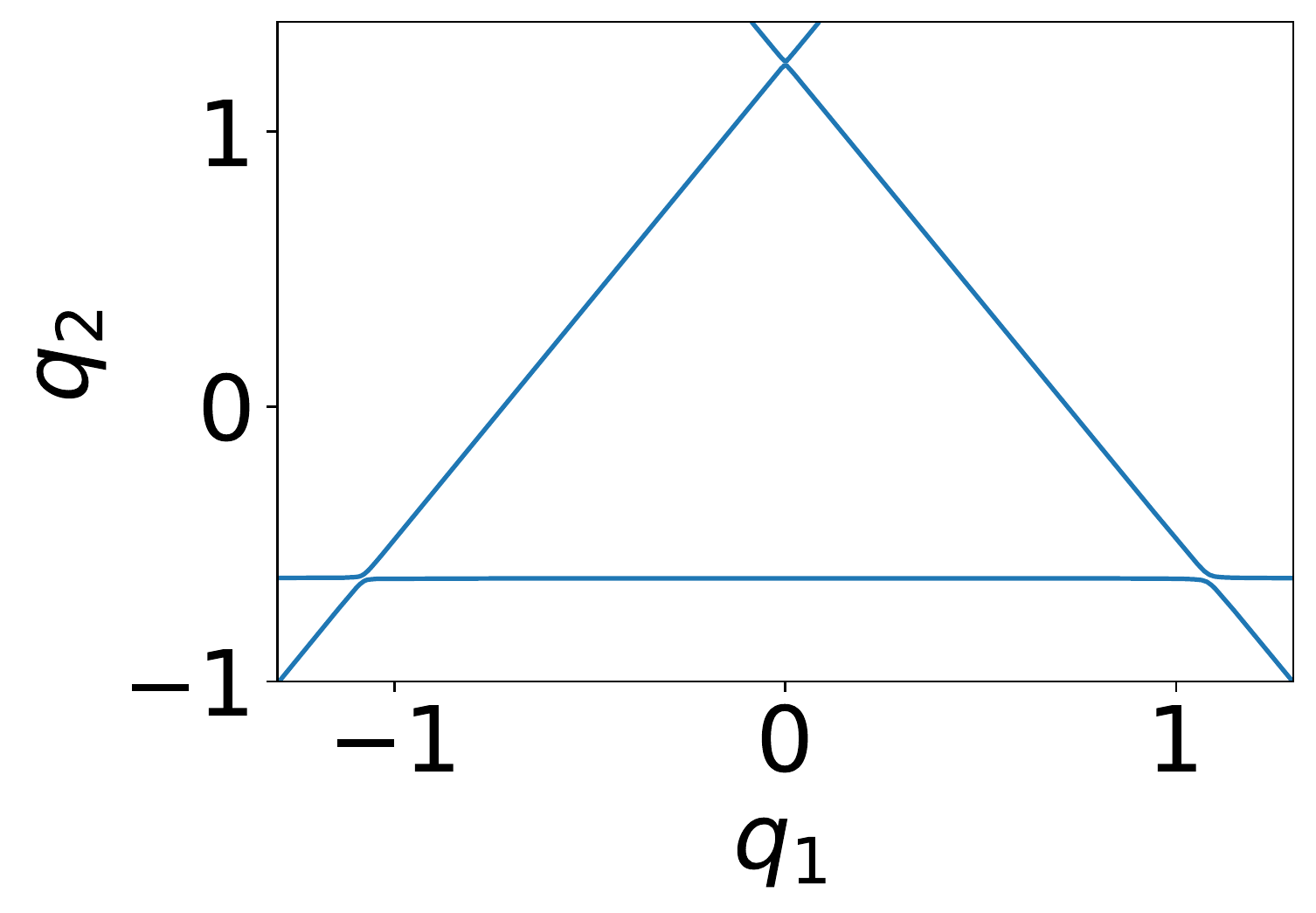}\\
\includegraphics[width=0.32\textwidth]{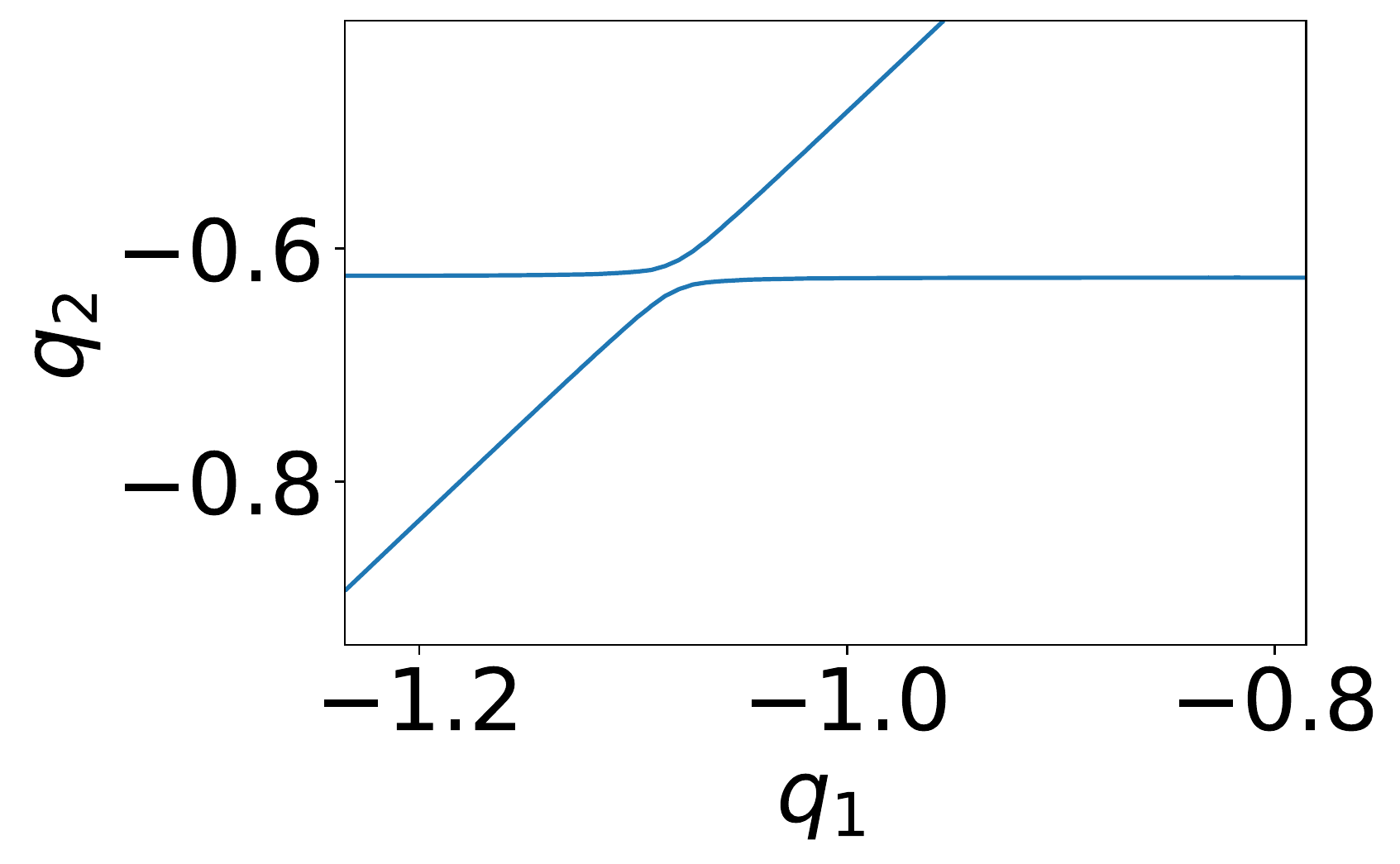}
\includegraphics[width=0.32\textwidth]{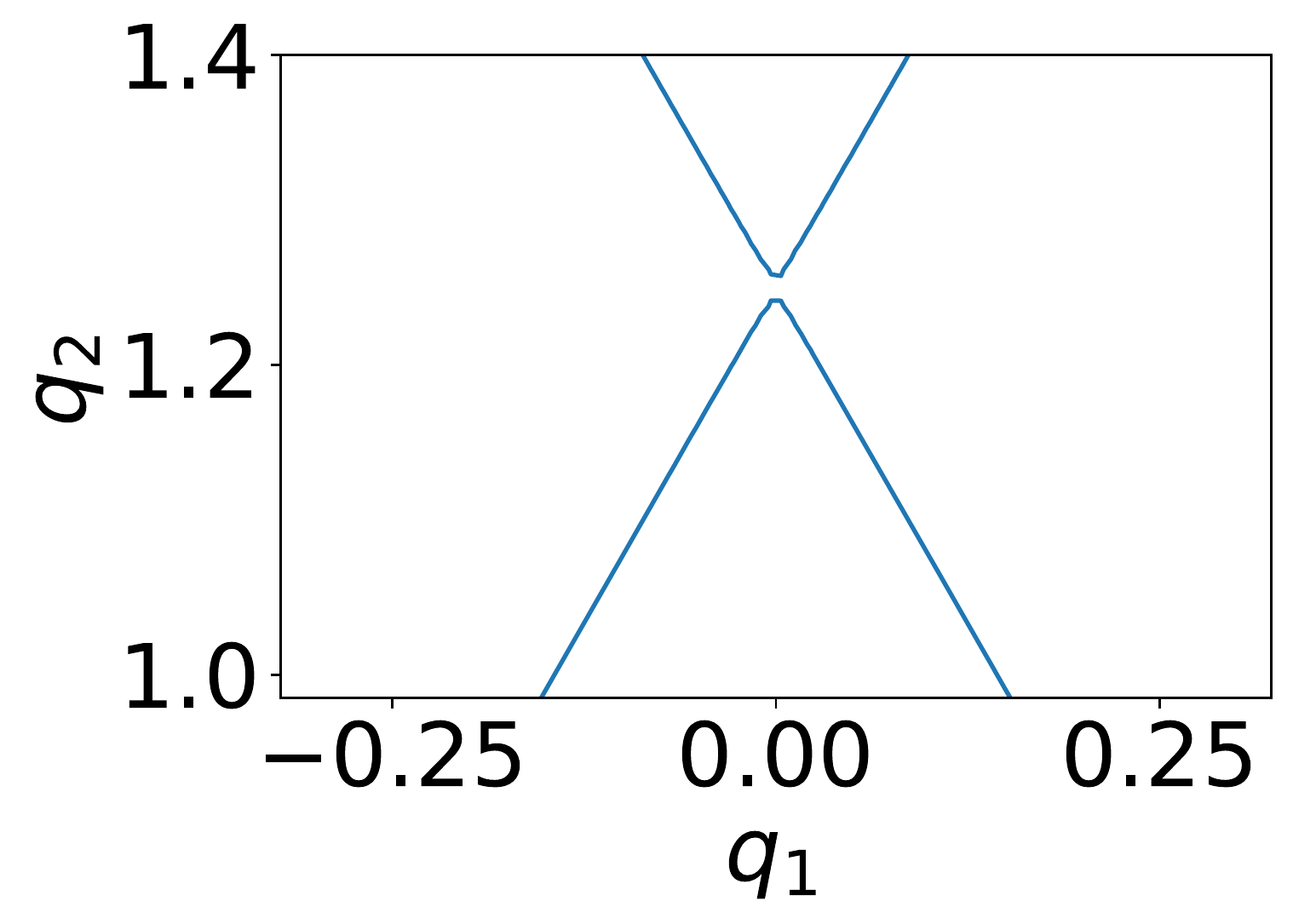}
\includegraphics[width=0.32\textwidth]{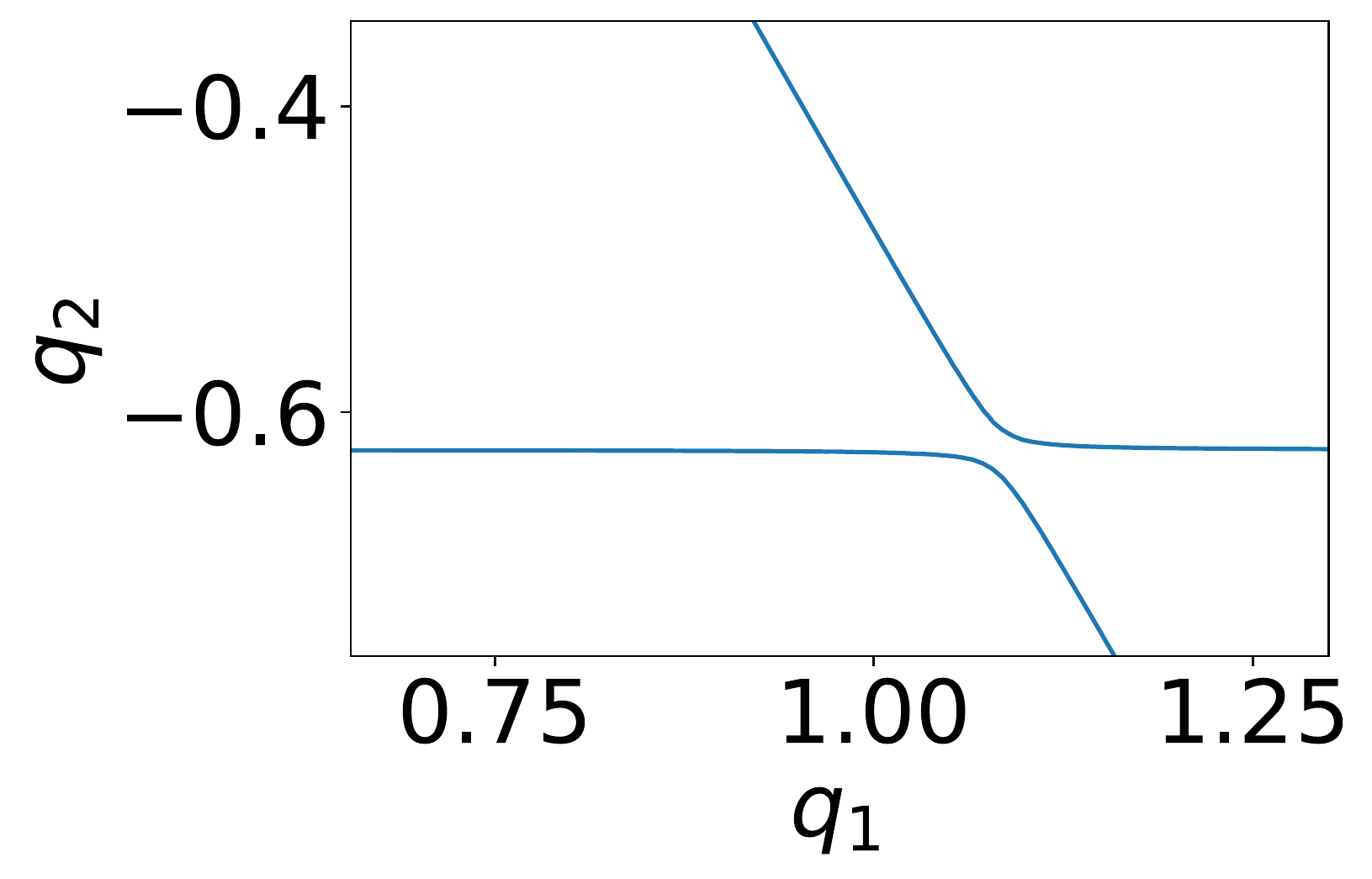}
\caption{The contour lines of the numerical energy (solid lines) closely approximate the contour lines of the energy of the exact system (black dashed lines). This is visualised by viewing the section of the phase space where $\dot{q}=0$ (top left). However, some level sets close to the separatrix of the exact system are incorrectly connected (top right). Close-up plots of the top right figure are displayed in the second row.}\label{fig:HenonPotential}
\end{figure}
A contour plot of $V^{[[2]]}_{\mathrm{LSI}}(q) = H^{[[2]]}_{\mathrm{LSI}} (q,(0,0))$ approximating the contours of the exact potential $V_\reff$ are plotted in \cref{fig:HenonPotential}. The plots confirm that LSI can successfully be used for system identification. Moreover, it verifies that the learned system is close to the exact system and that all contours away from the separatrix have the correct topology. This shows that motions initialised not too close to the separatrix qualitatively show the correct long-term behaviour. However, very close to the separatrix there are incorrectly connected contour lines leaving small channels for the numerical motions with close to critical energy to escape erroneously into another region of the phase space.
This explains why the LSI prediction of the experiment shown in the previous figures diverges eventually.

\section{Inverse modified Lagrangians}\label{sec:Existence}

In this section, we review variational backward error analysis which was developed in \cite{Vermeeren2017}.
Its methods are needed for the system identification part of Lagrangian Shadow Integration. Moreover, we develop an inverse version of variational backward error analysis and prove the existence of inverse modified Lagrangians in the setting of formal power series. This provides a theoretical justification of why Lagrangian Shadow Integration works.
 
Let us fix notation and review and extend classical notions in Lagrangian mechanics and in the theory of variational integrators to formal power series. 
In the following, $Q$ denotes an analytic manifold with tangent bundle $TQ$ and $\mathcal{C}^a(Q,\R)$ denotes the linear space of analytic maps from $Q$ to the real line $\R$. The space of $k$-jets over $Q$ is denoted by $\mathrm{Jet}^k(Q)$ for $k \in \N \cup \{ \infty\}$.
We will make use of a local identifications of $Q$ with a subset of $\R^n$ and corresponding trivialisations of $TQ$ and the jet-spaces. Local coordinates on $Q$ are denoted by $q = (q_1,\ldots q_n)$ and coordinates on the jet space by $q^{[k]} = (q, \dot{q},\ddot{q},q^{(3)},q^{(4)},\ldots)$.

\begin{definition}[Euler-Lagrange operator]
	We define the linear operator $\mathcal{E}$ by
	\[
	\mathcal{E}(L) = \sum_{j=0}^k (-1)^j \frac{\d^j}{\d t^j} \frac{\p L}{ \p q^{(j)} }
	\]
	for $L \in \mathcal{C}^a(\mathrm{Jet}^k(Q),\R)$.
	Here $\frac{\d^j}{\d t^j}$ denotes the total derivative. The equation $\mathcal{E}(L) =0$ is called {\em Euler-Lagrange equation} and $\mathcal{E}$ {\em Euler-Lagrange operator}. The definition of $\mathcal{E}$ can be linearly extended to formal power series $L = \sum_{j=0}^\infty s^j L_j \in \mathcal{C}^a(\mathrm{Jet}^k(Q),\R)[[s]]$ in a formal variable $s$.
\end{definition}

\begin{definition}[regular Lagrangian]
	A Lagrangian $L \in \mathcal{C}^a(TQ,\R)$ is {\em regular} or {\em non-degenerate}, if $\left(\frac{\p^2 L}{\p \dot{q}^i \p \dot{q}^j}\right)_{i,j}$ is invertible at $h=0$. A Lagrangian $L =  \sum_{j=0}^\infty s^j L_j \in \mathrm{Jet}^\infty(Q)[[s]]$ with $L_0\in \mathcal{C}^a(TQ,\R)$ is {\em regular} or {\em non-degenerate}, if $L_0$ constitutes a regular Lagrangian.
\end{definition}


\begin{proposition}\label{prop:modeq}
	Let $ \sum_{j=0}^\infty s^j L_j \in \mathcal{C}^a(\mathrm{Jet}^\infty(Q),\R)[[s]]$ be a regular Lagrangian, where $L_0 \in \mathcal{C}^a(TQ,\R)$ and $L_j \in \mathcal{C}^a(\mathrm{Jet}^{m_j}(Q),\R)$ for $m_j < \infty$ for all $j$.
	There exists a formal power series $\sum_{j=0}^\infty s^j g_j \in \mathcal{C}^a(TQ,\R)$ such that for any truncation index $k$ all solutions to the second order equation 
	\begin{equation}\label{eq:modeqtrunc}
		\ddot{q} = \sum_{j=0}^k s^j g_j(q, \dot{q})
	\end{equation}
	solve the Euler-Lagrange equation 
	\begin{equation}\label{eq:ELSeriesTruncated}
		\mathcal{E}\left(\sum_{j=0}^k s^j L_j\right)=0
	\end{equation}
	up to an error of order $\mathcal{O}(s^{k+1})$.
	
\end{proposition}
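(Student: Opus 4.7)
My plan is to construct the coefficient functions $g_j$ one at a time by induction on $j$. At each stage, regularity of $L_0$ --- i.e.\ invertibility of $\frac{\partial^2 L_0}{\partial \dot q\,\partial\dot q}$ --- reduces the task of finding $g_j$ to solving a single linear algebraic equation whose right-hand side is built from data already fixed. This is the formal-power-series mirror of the standard modified-equation construction in backward error analysis, run in the inverse direction.

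For the base case $j=0$, the Euler-Lagrange equation $\mathcal{E}(L_0)=0$ reads
\[
\frac{\partial L_0}{\partial q}(q,\dot q)-\frac{\partial^2 L_0}{\partial q\,\partial\dot q}(q,\dot q)\,\dot q-\frac{\partial^2 L_0}{\partial\dot q\,\partial\dot q}(q,\dot q)\,\ddot q\;=\;0,
\]
and by regularity this is uniquely solved for $\ddot q=g_0(q,\dot q)$ with $g_0$ analytic. For the inductive step, assume $g_0,\ldots,g_{k-1}$ have been chosen so that every solution of $\ddot q=\sum_{j=0}^{k-1}s^jg_j$ satisfies $\mathcal{E}\bigl(\sum_{j=0}^{k-1}s^jL_j\bigr)=\mathcal{O}(s^k)$, and posit $\ddot q=\sum_{j=0}^{k}s^jg_j$ with $g_k$ to be determined. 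Along any such curve, every higher jet $q^{(m)}$ with $m\geq 2$ is obtained by differentiating the ODE in $t$ and iteratively substituting back for $\ddot q$; this produces a polynomial in $s$ whose coefficients are analytic functions of $(q,\dot q)$. Since each $L_j$ depends only on jets of finite order $m_j$, just finitely many such reductions are required to rewrite $\mathcal{E}\bigl(\sum_{j=0}^{k}s^jL_j\bigr)$ as a function of $(q,\dot q,s)$.

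After this substitution, $\mathcal{E}\bigl(\sum_{j=0}^{k}s^jL_j\bigr)$ becomes a polynomial in $s$ whose coefficients of $s^0,\ldots,s^{k-1}$ vanish by the inductive hypothesis. The coefficient of $s^k$ has the form $-\frac{\partial^2 L_0}{\partial\dot q\,\partial\dot q}(q,\dot q)\,g_k(q,\dot q)+R_k(q,\dot q)$, where the linear-in-$g_k$ piece comes exclusively from $\mathcal{E}(L_0)$ acting on the $s^k$-correction of $\ddot q$: any appearance of $g_k$ inside $\mathcal{E}(s^jL_j)$ with $j\geq 1$ carries an extra factor $s^j$ and therefore cannot contribute at order $s^k$. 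The remainder $R_k\in\mathcal{C}^a(TQ,\R)$ depends only on $L_0,\ldots,L_k$ and $g_0,\ldots,g_{k-1}$, and by regularity the equation $\frac{\partial^2 L_0}{\partial\dot q\,\partial\dot q}\,g_k=R_k$ is uniquely solvable, producing the required $g_k\in\mathcal{C}^a(TQ,\R)$. Setting $s=h$ (or carrying the formal variable) and truncating at $k$ then yields \eqref{eq:modeqtrunc}, whose solutions satisfy \eqref{eq:ELSeriesTruncated} up to $\mathcal{O}(s^{k+1})$ by construction.

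The main obstacle I expect is purely bookkeeping: verifying carefully that the iterated reduction of higher jets by means of the truncated ODE preserves analyticity and yields, at each stage, a polynomial in $s$ of controlled degree whose $s^k$-coefficient indeed has $\frac{\partial^2 L_0}{\partial\dot q\,\partial\dot q}$ as the unique prefactor of $g_k$. Once this bookkeeping is made rigorous, the inductive construction goes through unchanged, and the recursion yields a well-defined formal power series $\sum_{j=0}^\infty s^j g_j$ with analytic coefficients.
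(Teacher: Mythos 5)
Your proposal is correct and follows essentially the same route as the paper: both arguments use regularity of $L_0$ to invert $\frac{\p^2 L_0}{\p \dot q \p \dot q}$ at leading order and then eliminate the higher jets $q^{(m)}$, $m\ge 2$, by repeatedly substituting time-derivatives of the (truncated) equation into itself, relying on the finite jet orders $m_j$ so that only finitely many substitutions are needed. The only difference is presentational: the paper inverts the Hessian once to obtain a pre-modified equation with higher jets on the right-hand side and then substitutes, whereas you make the same recursion explicit as an induction on the order, solving a linear equation for each $g_k$.
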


\begin{proof}
	Let $k$ be a truncation index and $m_{\max} = \max(m_0,m_1,\ldots,m_k)$. Since $L_0 \in \mathcal{C}^a(TQ,\R)$ is a regular Lagrangian, the Euler-Lagrange equations \eqref{eq:ELSeriesTruncated} are equivalent to a differential equation of the form
	\begin{equation}\label{eq:premod}
		\ddot{q} = g_0(q,\dot{q}) + \sum_{j=1}^k s^j \bar{g}_j(q, \dot{q},\ddot{q},\ldots,q^{(m_{\max})}).
	\end{equation}
	Time derivatives of \eqref{eq:premod} yield expressions for $\ddot{q},\ldots,q^{(k_{\max})}$.
	Repeated substitutions of these expressions into the right hand side of \eqref{eq:premod} yield after finitely many steps
	\begin{equation}\label{eq:premodred}
		\ddot{q} = \sum_{j=0}^k s^j g_j(q, \dot{q}) + \mathcal{O}(s^{k+1}).
	\end{equation}
	Solutions to \eqref{eq:modeqtrunc} solve \eqref{eq:premodred} and thus the Euler--Lagrange equations \eqref{eq:ELSeriesTruncated} up to an error term of order $\mathcal{O}(s^{k+1})$.
\end{proof}

\begin{definition}[variational integrator]\label{def:variationalintegrator}
	A variational integrator $I$ is a bounded linear map \[I \colon \mathcal{C}^a(TQ,\R) \to  \mathcal{C}^a(Q \times Q \times \R, \R).\qedhere\]
\end{definition}

\begin{remark}
	Images of variational integrators $L_\Delta = I(L) \in \mathcal{C}^a(Q \times Q \times \R, \R)$ are called {\em discrete Lagrangians}. Their last input argument is interpreted as a discretisation parameter or step-size. It is sometimes implicit in the notation $L_\Delta(q_0,q_1) = L_\Delta(q_0,q_1,h)$.
\end{remark}

\begin{definition}[consistent variational discretisation/integrator]
	A discrete Lagrangian $L_\Delta \in \mathcal{C}^a(Q \times Q, \R)$ is a {\em consistent variational discretisation} of a Lagrangian $L \in \mathcal{C}^a(TQ,\R)$ if it has variational order of at least 1, i.e.\ there exists an open set $U \subset TQ$ with compact closure and constants $c,\tau >0$ such that for all solutions of the Euler--Lagrange equations
	\[
	0=\mathcal{E}(L)
	=\frac{\p L}{\p q}(q,\dot{q}) - \frac{\d}{\d t}\frac{\p L}{\p \dot{q}}(q,\dot{q})
	\]
	with initial conditions $(q(0),\dot{q}(0)) \in U$ we have
	\[
	\left\| I(L)(q(0),q(h)) - \int_0^h L (q(t),\dot{q}(t)) \d t \right\| \le c h^2
	\]
	for all $h \le \tau$.
	
	A variational integrator $I$ is {\em consistent} if for all regular Lagrangians $L \in \mathcal{C}^a(TQ,\R)$ the discrete Lagrangian $I(L)$ is a consistent variational discretisation of $L$.
\end{definition}


\begin{definition}[(consistent) variational integrator on power series]
	Variational integrators can be defined on $\mathcal{C}^a(TQ,\R)[[s]]$ by a linear extension of \cref{def:variationalintegrator}. Here $\mathcal{C}^a(TQ,\R)[[s]]$ denotes the ring of formal power series in the formal variable $s$ over $\mathcal{C}^a(TQ,\R)$ (interpreted as a ring).
	A variational integrator $I$ is {\em consistent} if for any $L = \sum_{j=0}^\infty s^j L_j \in \mathcal{C}^a(TQ,\R)[[s]]$ the discrete Lagrangian $I(L_j)$ is a consistent variational discretisation of $L_j$.
\end{definition}


\begin{definition}[modified equation]\label{def:modeq}
	Consider a consistent variational discretisation $L_\Delta = I(L)$ of a regular Lagrangian $L$. An application of \cref{prop:modeq} to the series expansion of $L_\Delta(q(t),q(t+s))$ around $s=0$ yields a formal power series $\sum_{j=0}^\infty s^j g_j$. The equation
	\begin{equation}\label{eq:modEQDef}
		\ddot{q} = \sum_{j=0}^\infty s^j g_j(q, \dot{q})
	\end{equation}
	is called {\em modified equation} for $L_\Delta$.
\end{definition}

\begin{remark}
	Since $L$ in \cref{def:modeq} is regular and the discretisation $L_\Delta = I(L)$ consistent, the zeroth order truncation of the modified equation \eqref{eq:modEQDef} is equivalent to the Euler--Lagrange equation $\mathcal{E}(L)=0$.
\end{remark}

\begin{remark}
	Solutions to optimal truncations of the modified equation \eqref{eq:modEQDef} constitute solutions to the discrete Euler-Lagrange equations
	\begin{equation}\label{eq:DELcontinuous}
		\nabla_2 L_\Delta (q(t-s),q(t)) + \nabla_1 L_\Delta (q(t),q(t+s)) = 0
	\end{equation}
	on exponentially long time intervals up to an exponentially small error in the step size $s$.
	In \eqref{eq:DELcontinuous} $\nabla_1 L_\Delta$ and $\nabla_2 L_\Delta$ denote the derivatives with respect to the first or second input argument, respectively.
	See, for instance, \cite{GeomIntegration,Leimkuhler2005} for a rigorous analysis and optimal truncation techniques.
\end{remark}

The following result was developed by Vermeeren in \cite{Vermeeren2017}.

\begin{theorem}[variational backward error analysis]\label{thm:VarBEA}
	Consider a regular Lagrangian $L\in \mathcal{C}^a(TQ,\R)$ and a consistent variational discretisation $L_\Delta = I(L)$. There exists a formal power series $L_\mod$ in $h$, called {\em modified Lagrangian}, such that for any truncation $L^{[[k]]}_\mod$ after order $\mathcal{O}(h^k)$ the Euler-Lagrange equations $\mathcal{E}(L^{[[k]]}_\mod) = 0$ solved for $\ddot{q}$ coincide with the modified equation to $I(L)$ up to an error of order $\mathcal{O}(h^{k+1})$.
\end{theorem}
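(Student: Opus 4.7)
My plan is to build $L_\mod$ by passing from the discrete action $\sum_j L_\Delta(q(t_j), q(t_{j+1}))$ to a continuous one via Euler--Maclaurin and then reducing the resulting higher-order Lagrangian density to a first-order one (depending only on $q,\dot q$) by adding total time derivatives and substituting the leading-order modified equation. An equivalent but more computational alternative is to write $L_\mod = L + \sum_{k\ge 1} h^k L_k$ and determine $L_k$ inductively by solving, at order $h^k$, a linear PDE obtained from $\mathcal E(L_\mod^{[[k]]}) = 0$ combined with $\ddot q = \sum_{j\ge 0} h^j g_j$. I outline the Euler--Maclaurin route since it produces the full series in one stroke.

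For any smooth curve $q\colon [t_0, t_N]\to Q$, Taylor-expanding $L_\Delta(q(t), q(t+h)) = I(L)(q(t), q(t+h), h)$ around $h=0$ yields a formal power series $\sum_{k\ge 0} h^k \Lambda_k(q, \dot q, \ddot q,\ldots)$ with coefficients in $\mathcal C^a(\mathrm{Jet}^\infty(Q),\R)$; consistency of $I$ forces $\Lambda_0=0$ and $\Lambda_1=L$. The Euler--Maclaurin formula, viewed as a formal identity on power series, rewrites
\[
\sum_{j=0}^{N-1} L_\Delta(q(t_j), q(t_{j+1})) \;=\; \int_{t_0}^{t_N} \tilde L(q,\dot q, \ddot q,\ldots; h)\, dt + (\text{boundary terms}),
\]
where $\tilde L \in \mathcal C^a(\mathrm{Jet}^\infty(Q),\R)[[h]]$ starts at order $h^0$ with $L$. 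One then iteratively eliminates the dependence of $\tilde L$ on $\ddot q, q^{(3)}, \ldots$ by adding total time derivatives, which are invisible to the Euler--Lagrange operator, and by replacing $\ddot q$ via the leading-order modified equation $\ddot q = g_0(q,\dot q)$; any discrepancy introduced by this substitution is of higher order in $h$ and gets absorbed into the coefficient of $h^{k'}$ for some $k'>k$. Order by order in $h$ this reduction terminates after finitely many steps and yields $L_\mod = L + h L_1 + h^2 L_2 + \ldots \in \mathcal C^a(TQ,\R)[[h]]$.

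To verify the matching claim, note that the above construction identifies the continuous action $\int L_\mod^{[[k]]}\, dt$ with the discrete action $\sum_j L_\Delta(q(t_j),q(t_{j+1}))$ up to boundary contributions and $O(h^{k+1})$. Consequently, if a smooth curve $q$ solves $\mathcal E(L_\mod^{[[k]]})=0$, its samples at the grid points satisfy the discrete Euler--Lagrange equations up to $O(h^{k+1})$, i.e.\ $q$ is a solution of the modified equation of $L_\Delta$ up to the same order; combined with \cref{prop:modeq} and the uniqueness (as a formal power series) of the modified equation determined by $L_\Delta$, this forces the reduced second-order equation $\ddot q = \phi(q,\dot q;h)$ obtained from $\mathcal E(L_\mod^{[[k]]})=0$ to agree with $\sum_{j=0}^k h^j g_j$ modulo $O(h^{k+1})$. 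The main obstacle is the reduction step: one must carefully track how each substitution of a high-order derivative shifts orders in $h$, showing that no back-reaction obstructs the procedure at a given order, so that the procedure is well-defined and terminates layer by layer. This combinatorial bookkeeping is the technical heart of the argument and parallels the classical reduction used in Hamiltonian backward error analysis.
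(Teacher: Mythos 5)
Your proposal is correct and takes essentially the same route as the paper: the paper's ``meshed Lagrangian'' \eqref{eq:Lmesh}, with its Bernoulli-number coefficients $(2^{1-2j}-1)h^{2j}B_{2j}/(2j)!$, is precisely the Euler--Maclaurin correction you invoke (applied to the symmetric expansion of $L_\Delta(q(t-\tfrac h2),q(t+\tfrac h2),h)$ rather than $L_\Delta(q(t),q(t+h))$), and the subsequent order-by-order elimination of $\ddot q, q^{(3)},\ldots$ by iterated substitution of the modified equation is exactly the paper's final step. The only cosmetic differences are the choice of expansion point and your additional mention of discarding total time derivatives; both constructions coincide in substance with Vermeeren's, which the paper cites for the full justification.
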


Let us review the construction of $L_\mod$ from $L_\Delta = I(L)$ for a given maximal truncation index $k$.

\begin{proof}[Construction]
	Let $\mathcal{L}^{[[k]]}_\Delta(q^{[m]})$ be a series expansion around $h=0$ of $L_\Delta(q(t-\frac{h}{2}),q(t+\frac{h}{2}),h)$ after order $\mathcal{O}(h^k)$. Define the meshed Lagrangian
	\begin{equation}\label{eq:Lmesh}
		\mathcal{L}^{[[k]]}_{\mathrm{mesh}} 
		= \sum_{j=0}^{\left\lfloor \frac{k}{2}\right\rfloor} \left(2^{(1-2j)} - 1\right) h^{2j} \frac{B_{2j}}{(2j)!}
		\frac{\d^{2j}}{\d t^{2j}}\mathcal{L}^{[[k]]}_\Delta(q^{[m]}),
	\end{equation}
	where terms of order $\mathcal{O}(h^{k+1})$ are truncated at the right hand side.
	Instances of $q^{(l)}$ with $l \ge 2$ are eliminated by iteratively substituting the modified equation for $L_\Delta$ and its derivatives and truncating terms of order $\mathcal{O}(h^{k+1})$. The resulting expression is the modified Lagrangian $L^{[[k]]}_\mod$.
\end{proof}

\begin{remark}
	The construction of $L^{[[k]]}_\mod$ is consistent with truncations, i.e.\ $L^{[[k]]}_\mod - L^{[[k+1]]}_\mod \in \mathcal{O}(h^{k+1})$.
\end{remark}

We now introduce inverse modified Lagrangians $L_\imod$ and prove their existence as formal power series in the discretisation parameter.

\begin{theorem}[inverse variational backward error analysis]\label{thm:Linvmod}
	Consider a regular Lagrangian $L\in \mathcal{C}^a(TQ,\R)$ and a consistent variational integrator $I$. There exists a formal power series $L_\imod$ in the step size $h$ such that for any truncation index $k$ the following statements hold.
	\begin{itemize}
		
		
		\item
		The modified equation to $I(L^{[[k]]}_\imod)$ is equivalent to the exact Euler--Lagrange equations $\mathcal{E}(L)=0$ up to terms of order $\mathcal{O}(h^{k+1})$.
		
		\item
		The modified Lagrangian $(L^{[[k]]}_\imod)_\mod$ to the variational discretisation of $I(L^{[[k]]}_\imod)$ coincides with $L$ up to terms of order $\mathcal{O}(h^{k+1})$.

	\end{itemize}

\end{theorem}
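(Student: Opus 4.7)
The plan is to construct $L_\imod = L + \sum_{j \geq 1} h^j L_j$ inductively, order by order in $h$, so that the second conclusion holds for every truncation index. The first conclusion then follows automatically: by Theorem \ref{thm:VarBEA} the modified equation of $I(L_\imod^{[[k]]})$ coincides, up to $\mathcal{O}(h^{k+1})$, with the Euler--Lagrange equation $\mathcal{E}((L_\imod^{[[k]]})_\mod) = 0$, so if $(L_\imod^{[[k]]})_\mod = L + \mathcal{O}(h^{k+1})$ it coincides with $\mathcal{E}(L) = 0$ to the same order. Thus matching the modified Lagrangian of $L_\imod^{[[k]]}$ to $L$ is the single fact to engineer.

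First I would extract from the explicit construction behind Theorem \ref{thm:VarBEA} (the meshed Lagrangian \eqref{eq:Lmesh} followed by iterative elimination of $\ddot q, q^{(3)}, \ldots$ via the modified equation) the structure of the BEA map $B \colon \tilde L \mapsto \tilde L_\mod$ viewed as a map on formal power series. For $\tilde L = \sum_{i \geq 0} h^i \tilde L_i$ with $\tilde L_0$ regular, linearity of $I$ and consistency force the expansion
\[
B(\tilde L) = \tilde L + \sum_{j \geq 1} h^j \Phi_j[\tilde L_0, \ldots, \tilde L_{j-1}],
\]
where each $\Phi_j$ is a differential operator in the listed arguments. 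The essential \emph{triangular} feature is that $\Phi_j$ does not depend on $\tilde L_j$ itself. This is because in \eqref{eq:Lmesh} the only way the summand $h^k \tilde L_k$ of $\tilde L$ enters the meshed Lagrangian without picking up extra powers of $h$ is through the plain term $\tilde L_k$ in the series expansion of $I(\tilde L)$; every other occurrence (via the prefactors $h^{2j}$ of the meshed Lagrangian, via total derivatives $\mathrm{d}^{2j}/\mathrm{d} t^{2j}$, and via substitutions of the modified equation whose coefficients $g_0, g_1, \ldots$ are themselves built from $\tilde L_0, \tilde L_1, \ldots$) comes with an additional factor of $h$.

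With this structure in hand, the induction is routine. Set $L_0 := L$ and suppose $L_1, \ldots, L_{k-1}$ have been chosen so that
\[
\bigl(L_0 + h L_1 + \cdots + h^{k-1} L_{k-1}\bigr)_\mod = L + \mathcal{O}(h^k).
\]
Expanding $B(L_0 + h L_1 + \cdots + h^k L_k)$ and reading off the coefficient of $h^k$ yields $L_k + \Phi_k[L_0, L_1, \ldots, L_{k-1}]$, so setting
\[
L_k := -\Phi_k[L_0, L_1, \ldots, L_{k-1}]
\]
advances the inductive hypothesis one step. Regularity of $L_\imod^{[[k]]}$ is preserved throughout since its zeroth order coefficient is the regular Lagrangian $L$, hence Theorem \ref{thm:VarBEA} continues to apply. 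Consistency of truncations $L_\imod^{[[k+1]]} - L_\imod^{[[k]]} \in \mathcal{O}(h^{k+1})$ is automatic from the construction, so the $L_j$ assemble into a single formal power series $L_\imod$.

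The main obstacle, as is typical in BEA arguments, is the $h$-power bookkeeping needed to validate the triangular claim. The danger is that substituting the modified equation $\ddot q = g_0(q,\dot q) + h g_1(q,\dot q) + \cdots$ into the meshed Lagrangian might reintroduce $L_k$ through the higher coefficients $g_i$ (which depend on all $L_0, L_1, \ldots$), thereby making $\Phi_k$ implicitly depend on $L_k$ and turning the inductive definition into a fixed-point problem. The safeguard is the factor of $h$ carried by each iterated substitution combined with the $h^{2j}$ prefactors in \eqref{eq:Lmesh}; these together ensure that any contribution containing $L_k$ first appears at order $h^{k+1}$ in $B(\tilde L)$, so $\Phi_k$ really depends only on $L_0, \ldots, L_{k-1}$. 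Once this accounting is verified, the recursion is well-defined and the theorem follows.
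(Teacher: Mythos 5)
Your proposal is correct and follows essentially the same route as the paper's proof: the paper likewise reduces the first bullet to the second, makes the same ansatz $L_\imod = L + \sum_j L_j$, and determines the $L_j$ successively by exploiting exactly the triangularity you identify (that $L_l$ first appears as the plain coefficient at order $l$, every other occurrence carrying extra powers of the step size). The only cosmetic difference is that the paper introduces an auxiliary formal variable $s$ for the ansatz coefficients and tracks total bidegree in $(s,h)$ before setting $s=h$, which is just a bookkeeping device for the same accounting you carry out directly in $h$.
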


\begin{proof}
	The first two statements of the theorem follow directly from the last statement, which we now prove.
	Let $k$ be a truncation index. We consider the ansatz
	\begin{equation}\label{eq:Limodans}
		L^{[[k]]}_\imod = L(q,\dot{q}) + s L_1(q,\dot{q}) + \ldots + s^k L_k(q,\dot{q})
	\end{equation}
	for an additional formal variable $s$, where $L_1,\ldots,L_k$ need to be determined.
	When applied to the ansatz $L^{[[k]]}_\imod$, the construction of \cref{thm:VarBEA} yields $(L^{[[k]]}_\imod)^{[[k]]}_\mod$ which is a formal multivariate polynomial in $h$ and $s$.
	We then set $s=h$, truncate terms of order $\mathcal{O}(h^{k+1})$. We now show that one can derive analytic expressions for $L_1,\ldots,L_k$ by equating the coefficients of $h,h^2,\ldots,h^k$ successively with 0.
	
	The meshed Lagrangian $(L^{[[k]]}_\imod)^{[[k]]}_{\mathrm{mesh}}$ is of the form
	\begin{align*}
		\mathfrak{L}
		:=
		(L^{[[k]]}_\imod)^{[[k]]}_{\mathrm{mesh}}
		&= I(L^{[[k]]}_\imod) + h^2 \cdot  \mathcal{O}(s)
		= L(q,\dot{q}) + s L_1(q,\dot{q}) + \ldots + s^k L_k(q,\dot{q}) + (\mathcal{O}(h) +  h\cdot \mathcal{O}(s))
	\end{align*}
	since $I$ is consistent.
	For $l \le k$ let $\mathfrak{L}^{[[l]]}$ denote a truncation of the multivariate polynomial $\mathfrak{L}$ after terms of total degree $l$. 
	Observe that the multivariate polynomial $\mathfrak{L}^{[[l]]}$ contains only one instance of $L_l$. More precisely, $L_l(q,\dot{q})$ is the coefficient of $s^l h^0$.
	
	Using the regularity of $L$ and consistency of $I$, the modified equation for $I(L^{[[k]]}_\imod)$ is of the form
	\begin{equation}\label{eq:modEQLimod}
		\ddot{q} = \left( \frac{\p^2 L}{\p \dot{q} \p \dot{q}} \right)^{-1} \left( \frac{\p L}{\p q} - \frac{\p^2 L}{\p q \p \dot{q}} \dot{q} \right) + (\mathcal{O}(h) +   \mathcal{O}(s)).
	\end{equation}
	In this expression, $L_l$ appears only in summands which have total degree at least $l$. All appearances of $\ddot{q},q^{(3)},\ldots$ in $\mathfrak{L}$ are in summands of total degree at least 1. When
	$(L^{[[k]]}_\imod)^{[[k]]}_\mod$ is obtained from $\mathfrak{L}$ by repeated substitution of the right hand side of \eqref{eq:modEQLimod} and its derivatives, then a truncation of $(L^{[[k]]}_\imod)^{[[k]]}_\mod$ to total degree $l$ contains exactly one instance of $L_l(q,\dot{q})$ (the coefficient of $s^l h^0$).
	Now we set $s=h$ and truncate $(L^{[[k]]}_\imod)^{[[k]]}_\mod$ after terms of order $\mathcal{O}(h^k)$. 
	The discussion shows that equating the $l$th coefficient of $(L^{[[k]]}_\imod)^{[[k]]}_\mod$ with zero, we obtain an expression for $L_l(q,\dot{q})$ in terms of $L$, $L_1$, $\ldots$, $L_{l-1}$ and their derivatives. Setting $L_0 = L$, by an inductive argument, these expressions are analytic since $L$ is analytic. 
\end{proof}

The exact discrete Lagrangian $L^{\mathrm{exact}}_\Delta$ to a regular Lagrangian $L$ is defined as
\[
L^{\mathrm{exact}}_\Delta(q_0,q_1) = \int_0^h L(q(t),\dot{q}(t)) \d t,
\]
where $q$ solves the Euler--Lagrange equations $\mathcal{E}(L)=0$ with boundary conditions $q(0)=q_0$ and $q(h)=q_1$, where $q_0$, $q_1$ are sufficiently close such that the solution $q$ exists and is unique.

A reformulation of \cref{thm:Linvmod} yields

\begin{corollary}
	To a consistent variational integrator $I$ and a Lagrangian $L$ there exists an inverse modified Lagrangian $L_\imod$ such that the asymptotic expansion around 0 in the discretisation parameter $h$ of $I(L_\imod)$ and the exact discrete Lagrangian to $L$ coincide.
\end{corollary}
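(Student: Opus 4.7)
The plan is to derive the corollary directly from \cref{thm:Linvmod} together with the integral characterisation of modified Lagrangians that is built into the meshed-Lagrangian construction following \cref{thm:VarBEA}. That construction produces $L_\mod$ from $L_\Delta$ by inverting, at the level of formal power series, the asymptotic identity
\begin{equation*}
L_\Delta(q(t-h/2),\,q(t+h/2),\,h) \;\sim\; \int_{t-h/2}^{t+h/2} L_\mod(q(s),\dot q(s))\,\mathrm{d}s,
\end{equation*}
where $q$ is a solution of $\mathcal{E}(L_\mod)=0$; the Bernoulli-number coefficients in \eqref{eq:Lmesh} are precisely the Euler--Maclaurin coefficients that implement this inversion. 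Read as an equality of formal power series, this means that, conversely, $L_\Delta(q_0,q_1,h)$ coincides with $\int_0^h L_\mod(q(t),\dot q(t))\,\mathrm{d}t$ evaluated along the solution $q$ of $\mathcal{E}(L_\mod)=0$ with $q(0)=q_0$, $q(h)=q_1$.

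Given this, I would argue as follows. For each truncation index $k$, apply \cref{thm:Linvmod} to obtain $L_\imod^{[[k]]}$ satisfying $(L_\imod^{[[k]]})_\mod = L + \mathcal{O}(h^{k+1})$, and then apply the integral characterisation with $L_\Delta := I(L_\imod^{[[k]]})$ and $L_\mod := (L_\imod^{[[k]]})_\mod$. This yields
\begin{equation*}
I(L_\imod^{[[k]]})(q_0,q_1,h) \;=\; \int_0^h L(q(t),\dot q(t))\,\mathrm{d}t \;+\; \mathcal{O}(h^{k+1}),
\end{equation*}
where $q$ is the unique solution of $\mathcal{E}(L)=0$ with boundary data $q(0)=q_0$, $q(h)=q_1$. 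The integral on the right is by definition $L^{\mathrm{exact}}_\Delta(q_0,q_1)$. Since the construction of $L_\imod$ in the proof of \cref{thm:Linvmod} is built inductively by equating coefficients of $s^\ell h^0$ with zero, the truncations are mutually compatible in the sense that $L_\imod^{[[k+1]]} - L_\imod^{[[k]]} \in \mathcal{O}(h^{k+1})$, so taking $k \to \infty$ produces a single formal series $L_\imod$ whose image $I(L_\imod)$ matches $L^{\mathrm{exact}}_\Delta$ coefficient by coefficient in $h$.

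The main obstacle I foresee is making the integral characterisation fully rigorous in the reverse direction: the excerpt's construction produces $L_\mod$ from $L_\Delta$ via \eqref{eq:Lmesh} and substitution of the modified equation, whereas the corollary requires recovering $L_\Delta$ from $L_\mod$ as an integral along Euler--Lagrange solutions. I would handle this either by citing the formal invertibility of the Euler--Maclaurin operator that underlies the Bernoulli coefficients in \eqref{eq:Lmesh}, or by unfolding the construction to verify directly that the map $L_\mod \mapsto \int L_\mod\,\mathrm{d}t$ and the mesh procedure $L_\Delta \mapsto L_\mod$ are mutually inverse on formal power series in $h$. With this lemma in hand, the chain $(L_\imod)_\mod = L \Rightarrow I(L_\imod) \sim L^{\mathrm{exact}}_\Delta$ is immediate.
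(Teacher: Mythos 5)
Your proposal is correct and is essentially the argument the paper intends: the paper offers no separate proof beyond declaring the corollary ``a reformulation of \cref{thm:Linvmod}'', and your chain --- second bullet of \cref{thm:Linvmod} giving $(L^{[[k]]}_\imod)_\mod = L + \mathcal{O}(h^{k+1})$, combined with the single-step Euler--Maclaurin identity $L_\Delta(q_0,q_1)\sim\int_0^h L_\mod(q,\dot q)\,\d t$ along solutions of $\mathcal{E}(L_\mod)=0$ that underlies the meshed Lagrangian \eqref{eq:Lmesh} --- is precisely the content of that reformulation. The ``obstacle'' you flag is real but benign: the $j\ge 1$ terms in \eqref{eq:Lmesh} are total time derivatives whose integrals over $[0,h]$ reproduce exactly the Euler--Maclaurin boundary corrections, so the integral characterisation of $L_\mod$ holds per step by construction, and your argument closes.
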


\begin{example}
	As a variational integrator, consider the midpoint rule $I$ defined by
	\[
	I(L)(q_0,q_1) = h L\left(\frac{q_0+q_1}{2},\frac{q_1-q_0}{h} \right).
	\]
	Using variational backward error analysis (\cref{thm:VarBEA}) and inverse variational backward error analysis (\cref{thm:Linvmod}) we obtain in the one-dimensional case
	\begin{align*}
		L_\mod &=L
		+\frac{1}{24} h^2 \left(\frac{\left(L^{(1,0)}-L^{(1,1)} \dot{q}\right)^2}{L^{(0,2)}}-L^{(2,0)} \dot{q}^2\right)
		+O\left(h^4\right)\\
		L_\imod &= L
		-\frac{1}{24} h^2 \left(\frac{\left(L^{(1,0)}-L^{(1,1)} \dot{q}\right)^2}{L^{(0,2)}}-L^{(2,0)} \dot{q}^2\right)
		+O\left(h^4\right).
	\end{align*}
	Here $L^{(i,j)} = \frac{\p^{i+j}}{ (\p q)^i  (\p \dot{q})^j } L(q,\dot{q})$.
	Higher order terms are more complicated and can be found along computational details in the corresponding GitHub repository.
\end{example}

\begin{example}
	Now consider the trapezoidal variational integrator $I$ defined by
	\[
	I(L)(q_0,q_1) = \frac{h}{2} L\left(q_0,\frac{q_1-q_0}{h} \right) + \frac{h}{2} L\left(q_1,\frac{q_1-q_0}{h} \right).
	\]
	The modified and inverse modified Lagrangians are given by
	\begin{align*}
		L_\mod &= L+ \frac{1}{24} h^2 \left(2 L^{(2,0)} \left(\dot{q}\right)^2+\frac{\left(L^{(1,0)}-L^{(1,1)} \dot{q}\right)^2}{L^{(0,2)}}\right)
		+O\left(h^4\right)\\
		L_\imod &= L-\frac{1}{24} h^2 \left(2 L^{(2,0)} \left(\dot{q}\right)^2+\frac{\left(L^{(1,0)}-L^{(1,1)} \dot{q}\right)^2}{L^{(0,2)}}\right)
		+O\left(h^4\right)\\
	\end{align*}
	Higher order terms are more complicated and can be found along computational details in the corresponding GitHub repository.
\end{example}

\section{Conclusion}

We have introduced the framework of Lagrangian Shadow Integration (LSI) and proved the existence of inverse modified Lagrangians in the sense of formal power series. 
The key novelty of LSI is that two types of discretisation errors are eliminated by design: (1) inverse modified Lagrangians can be learned directly from position data of trajectories. An approximation of velocities or acceleration data is not required. (2) Inverse modified Lagrangians compensate for the discretisation error of the integrator which is used to compute predictions of motions. Therefore, large step-sizes can be used while maintaining high accuracy.
Further, LSI does not require prior knowledge about the form of the Lagrangian such that the method applies very generally.
Moreover, the performance of LSI can be conveniently analysed using variational backward error analysis techniques. Additionally, system identification can be performed.

Future research directions are to embed symmetries into the learning process, for instance using GIM kernels or symmetric neural networks. An LSI prediction will then, by Noether's theorem, profit from exact conservation of conserved quantities related to variational symmetries, which should be beneficial especially for completely integrable systems.
It appears plausible that an introduction of structure into learning approaches for dynamical systems increases its resilience to noise in the learning data, reduces data requirements, and that the compensation of discretisation errors, as performed by LSI, remains important for noisy data since discretisation errors tend to be biased in contrast to noise.
A systematic investigation of these questions is part of future work.
Moreover, the statistical framework of Gaussian Processes could be used to obtain uncertainty estimations of predictions obtained using LSI with GPs. Furthermore, Lagrangian neural networks \cite{LNN} have successfully been applied to PDEs with variational structure. Practical applicability of LSI in this context can be investigated.
A more theoretical research direction is to transfer optimal truncation results for modified Lagrangians and Hamiltonians to inverse modified Lagrangian.



\bibliographystyle{elsarticle-num} 
\bibliography{bib}

\end{document}